\theoremstyle{plain}
\newtheorem{thm}{Theorem}[section]
\newtheorem{Corollary}[thm]{Corollary}
\newtheorem{Lemma}[thm]{Lemma}
\newtheorem{Proposition}[thm]{Proposition}
\theoremstyle{definition}
\theoremstyle{remark}
\newtheorem{Remark}[thm]{Remark}
\numberwithin{equation}{section}
\newcommand{\average}{{\mathchoice {\kern1ex\vcenter{\hrule height.4pt
width 6pt depth0pt} \kern-9.7pt} {\kern1ex\vcenter{\hrule
height.4pt width 4.3pt depth0pt} \kern-7pt} {} {} }}
\begin{document}

\title[Nonlocal Dirichlet problems with $C^{0,\alpha}$ exterior data]{The Dirichlet problem for nonlocal elliptic operators with $C^{0,\alpha}$ exterior data}

\author{Alessandro Audrito}
\address{Politecnico di Torino, Institute of Mathematics DISMA, Corso Duca degli Abruzzi 24, 10129, Torino, Italy \newline \indent Universit\"at Z\"urich, Department of Mathematics, Winterthurerstrasse 190, CH-8057 Z\"urich, Switzerland}
\email{alessandro.audrito@polito.it; alessandro.audrito@math.uzh.ch}

\author{Xavier Ros-Oton}
\address{Universit\"at Z\"urich, Institute of Mathematics, Winterthurerstrasse 190, CH-8057 Z\"urich, Switzerland  \newline \indent ICREA, Passeig Llu\'is Companys 23, 08010 Barcelona, Spain
 \newline \indent 
Universitat de Barcelona, Departament de Matem\`atiques i Inform\`tica, Gran Via 585, 08007 Barcelona, Spain}
\email{xavier.ros-oton@math.uzh.ch}

\keywords{Nonlocal equations, boundary regularity.}
\subjclass[2010]{35B65; 47G20.}

\thanks{A.A. was supported by PRIN 2015 (MIUR, Italy), and by the GNAMPA projects ``Esistenza e propriet\`a qualitative per soluzioni di EDP non lineari ellittiche e paraboliche'' and ``Ottimizzazione Geometrica e Spettrale'' (Italy). X.R. was supported by the European Research Council under the Grant Agreement No. 801867 ``Regularity and singularities in elliptic PDE (EllipticPDE)'', by the Swiss National Science Foundation, and by MINECO grant MTM2017-84214-C2-1-P}

\begin{abstract}
In this note we study the boundary regularity of solutions to nonlocal Dirichlet problems of the form $Lu=0$ in $\Omega$, $u=g$ in $\mathbb R^N\setminus\Omega$, in non-smooth domains $\Omega$.
When $g$ is smooth enough, then it is easy to transform this problem into an homogeneous Dirichlet problem with a bounded right hand side, for which the boundary regularity is well understood.
Here, we study the case in which $g\in C^{0,\alpha}$, and establish the optimal H\"older regularity of $u$ up to the boundary.
Our results extend previous results of Grubb for $C^\infty$ domains~$\Omega$.
\end{abstract}

\maketitle

\section{Introduction} \label{SectionIntroduction}

Given a bounded Lipschitz domain $\Omega \subset \mathbb{R}^N$, we study the regularity of solutions to nonlocal Dirichlet problems of the form
\begin{equation}\label{eq:NonlocalDirichletProblem}
\left\{\begin{array}{rcll}
L u &=& 0 \quad &\text{in } \Omega \\
u& =& g             \quad &\text{in } \mathbb{R}^N\setminus\Omega,
\end{array}\right.
\end{equation}
where $L$ is an operator of the form
\begin{equation}\label{eq:NonlocalNonHomogenousOperator}
-Lu(x) = \int_{\mathbb{R}^N} \left( u(x) - \frac{u(x+y) + u(x-y)}{2} \right) K(y) dy,
\end{equation}
with kernel $K$ satisfying
\begin{equation}\label{eq:KernelProp}
 K(y) = K(-y) \quad \textrm{and} \quad
 \frac{\lambda}{|y|^{N+2s}} \leq K(y) \leq \frac{\Lambda}{|y|^{N+2s}}, \quad y  \in \mathbb{R}^N \setminus \{0\}.
\end{equation}
Here, $s \in (0,1)$ and $0 < \lambda \leq \Lambda$.

In most of our results we will assume in addition that
\begin{equation}\label{eq:NonlocalHomogenousOperator}
K \ \textrm{is homogeneous.}
\end{equation}
Notice that, when  $\lambda = \Lambda$, we recover (a multiple of) the fractional Laplacian $(-\Delta)^s$.
Even in that case, the results that we establish in this paper were only known for $C^\infty$ domains~$\Omega$; see Grubb \cite[Theorem 2.5]{Grubb2014}.

The existence, regularity, and further properties of solutions to nonlocal Dirichlet problems of this type has been an active topic of research in the last years; we refer to \cite{BarlesImbert2008:art, Bogdan1997:art, BogdanGrzywnyRyznar2015:art,CS09,  FelKassVoigt2015:art, Mou2017:art, RosOton2016:art} and references therein.

When $g$ is smooth enough (e.g., $g\in C^{2s+\varepsilon}$ for some $\varepsilon>0$) then it is easy to transform \eqref{eq:NonlocalDirichletProblem} into a homogeneous Dirichlet problem of the type $Lu=f$ in $\Omega$, $u=0$ in $\Omega^c$, with $f\in L^\infty(\Omega)$.
It is well known then that (as long as $\Omega$ is smooth enough), solutions $u$ are $C^{0,s}$ up to the boundary.

However, when $g$ is less regular (e.g., $g\in C^{0,\alpha}$) then the boundary regularity of solutions to \eqref{eq:NonlocalDirichletProblem} has to be treated more carefully, and to the best of our knowledge this has only been studied in case of $C^\infty$ domains by \cite{Grubb2014}.

Roughly speaking, the aim of this paper is to show that, when $g\in C^{0,\alpha}$ ($\alpha>0$ small) and $\Omega$ is Lipschitz, then $u$ is $C^{0,\alpha}$ up to the boundary.
This is explained in more detail next.

\subsection{The local case}
When $s=1$, solutions to the Dirichlet problem
\[
\left\{\begin{array}{rcll}
\Delta u &=& 0 \quad &\text{in } \Omega \\
u& =& g             \quad &\text{on } \partial\Omega
\end{array}\right.
\]
satisfy the following (cfr. \cite{GilbargTrudinger2001:book,Kenig:book}):
\begin{itemize}

\item[(a)] If $g \in C^{0,\alpha}(\partial\Omega)$ for some $\alpha \in (0,1)$, and $\Omega$ is at least $C^1$, then $u \in C^{0,\alpha}(\overline{\Omega})$.

\item[(b)] If $g \in C^{0,1}(\partial\Omega)$ then in general $u \not\in C^{0,1}(\overline{\Omega})$, even if $\Omega$ is of class $C^{\infty}$.

\item[(c)] If $g \in C^{1,\alpha}(\partial\Omega)$ for some $\alpha \in (0,1)$, and $\Omega$ is at least $C^{1,\alpha}$, then $u \in C^{1,\alpha}(\overline{\Omega})$.
\end{itemize}

Finally, when $\Omega$ is not $C^1$ but only Lipschitz, we have the following:
\begin{itemize}

\item[(d)] If $\Omega$ is Lipschitz, then there exists $\alpha_0=\alpha_0(\Omega)>0$ such that if $g \in C^{0,\alpha}(\partial\Omega)$ for some $\alpha \in (0,\alpha_0]$, then $u \in C^{0,\alpha}(\overline{\Omega})$.

\end{itemize}
The above results are sharp in terms of the regularity of $g$, and also in terms of the regularity of $\Omega$.

%
%
%
%
%
%
\subsection{Our results}

The goal of this paper is to provide analogous results to (a), (b), (c), and (d) for nonlocal Dirichlet problems of the type \eqref{eq:NonlocalDirichletProblem}, with $s\in(0,1)$.

The right assumption on the exterior datum $g$ turns out to be
\begin{equation}\label{eq:AssumptionsBoundaryData}
|g(x) - g(z)| \leq C_0 |x - z|^{\alpha} \quad \text{ for all } x \in \mathbb{R}^N\setminus\Omega, \; z \in \partial\Omega,
\end{equation}
for some constant $C_0$ and $\alpha\in(0,1)$.
Notice that, in particular, $g$ is $C^{0,\alpha}$ on $\partial \Omega$ (but not necessarily outside $\overline\Omega$).
Moreover, taking $C_0$ larger if necessary, $g$ will satisfy the growth condition
\begin{equation}\label{eq:GrowthCondition}
|g(x)| \leq C_0(1 + |x|^{\alpha}), \qquad x \in \mathbb{R}^N \setminus \Omega,
\end{equation}

Our first (and main) result provides the analogue of property (a) above.

\begin{thm}[$\alpha<s$]\label{Theorem:MainTheorem}
Let $\Omega\subset\mathbb R^N$ be any bounded $C^1$ domain, $s \in (0,1)$, $L$ as in \eqref{eq:NonlocalNonHomogenousOperator}-\eqref{eq:KernelProp}-\eqref{eq:NonlocalHomogenousOperator} and $g$ as in \eqref{eq:AssumptionsBoundaryData}-\eqref{eq:GrowthCondition}, with $\alpha\in(0,s)$.

Then the solution $u$ to \eqref{eq:NonlocalDirichletProblem} is of class $C^{0,\alpha}(\overline{\Omega})$, with
\[\|u\|_{C^{0,\alpha}(\overline\Omega)} \leq CC_0,\]
where $C$ depends only on $n$, $s$, $\lambda$, $\Lambda$, $\alpha$, and $\Omega$.
\end{thm}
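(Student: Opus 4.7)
The plan is to establish a pointwise $\alpha$-H\"older estimate at every boundary point and then combine it with interior regularity via a rescaling argument. The boundary estimate
\[
|u(x) - g(x_0)| \leq C C_0 |x - x_0|^\alpha, \qquad x \in \Omega, \; x_0 \in \partial\Omega,
\]
is obtained by the method of barriers: for each $x_0 \in \partial\Omega$ we construct comparison functions $\phi^\pm$ satisfying $L\phi^+ \leq 0 \leq L\phi^-$ in $\Omega$, $\phi^\pm(x_0) = 0$, $|\phi^\pm(x)| \leq C C_0 |x - x_0|^\alpha$ in $\Omega$ near $x_0$, and (using \eqref{eq:AssumptionsBoundaryData}) $\phi^-(x) \leq g(x) - g(x_0) \leq \phi^+(x)$ on $\mathbb{R}^N \setminus \Omega$. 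The comparison principle then yields the claim.

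The technical heart is the construction of $\phi^\pm$. In the model case $\Omega = \{x_N > 0\}$ with $x_0 = 0$, homogeneity and symmetry of $K$ imply that functions of the form $(x_N)_+^\beta$ or $|x|^\beta$ satisfy $L(\cdot) = c\cdot(\text{homogeneous of degree } \beta - 2s)$ on their smoothness domain, with constants whose signs one can read off from the limit cases (as $\beta \to 0^+$, $(x_N)_+^\beta$ approaches $\chi_\Omega$, a classical supersolution; at $\beta = s$, $(x_N)_+^s$ is the explicit $L$-harmonic half-space solution). A suitable combination $\phi^+(x) = C_1 (x_N)_+^\alpha + C_0 |x - x_0|^\alpha$ has $L\phi^+ \leq 0$ in $\Omega$ near $x_0$ (the negative contribution from $(x_N)_+^\alpha$ dominates whenever $x_N \leq |x - x_0|$, which holds in $\Omega$), $\phi^+ \geq C_0 |x|^\alpha$ outside $\Omega$, and $\phi^+ \leq CC_0 |x - x_0|^\alpha$ inside near $x_0$; a symmetric construction gives $\phi^-$, and both are extended globally by cutoff and addition of a large constant. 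The strict gap $\alpha < s$ is used to secure a definite sign for the principal part. For a general $C^1$ domain we flatten $\partial\Omega$ near $x_0$ by a $C^1$-diffeomorphism; the deviation from the half-space has vanishing modulus on small balls, and the same gap $\alpha < s$ leaves enough room to absorb the flattening perturbation into the principal term.

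With the boundary estimate in hand, combining with \eqref{eq:AssumptionsBoundaryData}-\eqref{eq:GrowthCondition} yields the global bound $|u(z) - g(x_0)| \leq CC_0 |z - x_0|^\alpha$ for every $z \in \mathbb{R}^N$, where $x_0$ is the projection on $\partial\Omega$ of a chosen $x \in \Omega$. Setting $r := d(x, \partial\Omega)$ and $v(y) := r^{-\alpha}[u(x + ry) - g(x_0)]$, we obtain $Lv = 0$ in $B_{1/2}$ with $|v(y)| \leq CC_0(1 + |y|)^\alpha$ on $\mathbb{R}^N$. Standard interior $C^{0,\alpha}$ estimates for translation-invariant $L$-harmonic functions with polynomial growth, valid uniformly over the class \eqref{eq:KernelProp}-\eqref{eq:NonlocalHomogenousOperator}, give $[v]_{C^{0,\alpha}(B_{1/4})} \leq CC_0$, which rescales to $[u]_{C^{0,\alpha}(B_{r/4}(x))} \leq CC_0$. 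Patching interior and boundary bounds (splitting on whether $|x - y| \leq \tfrac14 \max(d(x), d(y))$) then yields $\|u\|_{C^{0,\alpha}(\overline\Omega)} \leq CC_0$. The main obstacle throughout is the barrier construction: explicit formulas are unavailable in the general kernel class \eqref{eq:KernelProp}-\eqref{eq:NonlocalHomogenousOperator}, so the sign analysis must rely on structural (homogeneity, symmetry) arguments and quantitative bounds that are uniform in the ellipticity constants, and closing the perturbation step on a merely $C^1$ boundary requires the strict inequality $\alpha < s$.
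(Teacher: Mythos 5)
Your overall architecture (boundary barrier estimate, interior rescaling via the growth bound, patching on whether the pair of points is closer to the boundary than to each other) mirrors what the paper does, but two of your steps differ substantively from the paper's, and one of them contains a genuine gap.

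The non-problematic difference: you compare $u$ directly to $g(x_0)\pm\phi^\pm$ at each boundary point, whereas the paper first subtracts the harmonic extension $\overline{g}$ of $g$ (Lemma~\ref{Lemma:PropExtensionBoundaryData}), reducing to the homogeneous Dirichlet problem \eqref{eq:SecondNonlocalDirichletProblem} with a right-hand side blowing up like $d^{\alpha-2s}$ (Lemma~\ref{Lemma:BoundFracLapExtg}). Both routes are viable in principle; the paper's subtraction has the advantage that $v=u-\overline g$ vanishes identically outside $\Omega$, which makes the nonlocal tail in the interior rescaling automatic and cleanly separates ``exterior data'' from ``singular forcing.''

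The genuine gap is your treatment of the general $C^1$ domain: ``we flatten $\partial\Omega$ near $x_0$ by a $C^1$-diffeomorphism \ldots{} the same gap $\alpha<s$ leaves enough room to absorb the flattening perturbation into the principal term.'' Flattening does not work for nonlocal operators. A $C^1$ change of variables $\Psi$ transforms $L$ into a nonlocal operator whose kernel is $x$-dependent, is \emph{not} of convolution type $K(x-z)$, and in general violates both the symmetry $K(y)=K(-y)$ and the pointwise comparability \eqref{eq:KernelProp}. The resulting perturbation of the operator is governed by the global geometry of $\Psi$, not by the local deviation of $\partial\Omega$ from a hyperplane, so it is not ``small'' in any sense that the gap $\alpha<s$ alone can absorb. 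This is precisely why the paper avoids flattening and instead builds the barrier from Lieberman's regularized distance: taking $\psi\in C^1(\overline\Omega)$ with $\psi\asymp d$ and $|\nabla\psi(x)-\nabla\psi(y)|\leq\omega(|x-y|)$ (Lemma~\ref{lemma-Lieberman}), it writes $\psi^\alpha=l^\alpha+(\psi^\alpha-l^\alpha)$, where $l$ is the linearization of $\psi$ at $x_0$, uses the exact supersolution property of $l^\alpha$ (which reduces to your one-dimensional observation about $(t_+)^\alpha$), and then controls $L(\psi^\alpha-l^\alpha)$ by the integral estimate of Lemma~\ref{PreliminaryLemma2}, which exploits the smallness of the modulus $\omega$. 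Carrying out this perturbation argument directly, without ever changing coordinates, is the technical core of the theorem and is missing from your proposal.

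A secondary, smaller point: your supersolution ansatz $\phi^+=C_1(x\cdot\nu)_+^\alpha+C_0|x-x_0|^\alpha$ does yield $-L\phi^+\geq0$ in the exact half-space (using $x\cdot\nu\leq|x-x_0|$ in $\Omega$, homogeneity, and a large $C_1$), but this is exactly the model case; the entire difficulty lies in passing from the half-space to a $C^1$ boundary, which is where the flattening idea fails.
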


Moreover, we will show that the previous result fails when $\alpha=s$, even if $\Omega$ is smooth.
This is the analogue of property (b) above.

\begin{Proposition}[$\alpha=s$]\label{Proposition:MainProp}
Let $s \in (0,1)$ and $-L=(-\Delta)^s$.

Then, there exists a $C^\infty$ domain $\Omega\subset\mathbb R^2$ and a function $g$ satisfying \eqref{eq:AssumptionsBoundaryData}-\eqref{eq:GrowthCondition} with $\alpha=s$, such that the solution $u$ to \eqref{eq:NonlocalDirichletProblem} satisfies $u\notin C^{0,s}(\overline\Omega)$.
\end{Proposition}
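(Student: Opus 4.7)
The plan is to exhibit a concrete counter-example in which the logarithmic loss at the critical exponent $\alpha = s$ can be computed explicitly. I take the smooth domain $\Omega := B_1(e_2) \subset \mathbb{R}^2$ with $e_2 := (0,1)$, so that $\Omega \subset \{x_2 \geq 0\}$ and $0 \in \partial\Omega$. Fix a cut-off $\eta \in C_c^\infty(\mathbb{R}^2)$ with $\eta \equiv 1$ on $B_{1/2}$ and $\mathrm{supp}\,\eta \subset B_1$, and set
\[
g(y) \ := \ \eta(y)\,(-y_2)_+^s.
\]
Since $t \mapsto t_+^s$ belongs to $C^{0,s}(\mathbb{R})$ and $\eta$ is smooth with compact support, $g$ is bounded and lies in $C^{0,s}(\mathbb{R}^2)$; conditions \eqref{eq:AssumptionsBoundaryData} and \eqref{eq:GrowthCondition} are therefore satisfied with $\alpha = s$. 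Moreover $g \equiv 0$ on $\partial\Omega$, so by the standard continuity up to $\partial\Omega$ for $(-\Delta)^s$ with continuous exterior data we have $u(0) = 0$, while $u \geq 0$ in $\mathbb{R}^2$ by the maximum principle.

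The proposition will then follow from the claim
\[
u(te_2) \ \geq \ c\, t^s \log(1/t) \qquad \text{for all } 0 < t < t_0,
\]
which contradicts $C^{0,s}$-regularity of $u$ at the origin. To prove this, I would use the explicit Riesz-Poisson representation for $(-\Delta)^s$ on the ball:
\[
u(te_2) \ = \ c_{2,s}\!\int_{\Omega^c}\!\!\left(\frac{1-|te_2-e_2|^2}{|y-e_2|^2-1}\right)^{\!s}\frac{g(y)}{|te_2-y|^2}\,dy.
\]
Direct computation gives $1 - |te_2-e_2|^2 = t(2-t)$ and, for $y$ close to $0$ with $y_2 < 0$, $|y-e_2|^2 - 1 = y_1^2 + y_2^2 + 2|y_2|$. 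Restricting the integration to
\[
A \ := \ \{y \in \mathbb{R}^2 : y_1^2 \leq |y_2|,\ -1/4 < y_2 < 0\},
\]
on which $|y-e_2|^2 - 1$ is comparable to $|y_2|$ and $\eta \equiv 1$, one exploits the critical cancellation $(-y_2)^s / |y_2|^s = 1$ to obtain
\[
u(te_2) \ \geq \ c\, t^s \int_A \frac{dy_1\,dy_2}{y_1^2 + (t+|y_2|)^2}.
\]
Performing the $y_1$-integration over $\{|y_1| \leq \sqrt{|y_2|}\}$ yields a factor bounded below by $c(t+|y_2|)^{-1}$ once $|y_2| \geq t$, and integrating in $|y_2|$ over the interval $(t, 1/4)$ produces the announced logarithmic factor $\log(1/t)$.

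The main obstacle is precisely the two-dimensional integral estimate above: the exponent $\alpha = s$ is the threshold at which the $s$-power of $(-y_2)_+$ coming from $g$ exactly cancels the $s$-power of $|y-e_2|^2-1$ in the denominator of the Poisson kernel, converting an otherwise convergent integral into a logarithmically divergent one. The remaining ingredients (continuity of $u$ up to $\partial\Omega$ for $C^{0,s}$ exterior data, nonnegativity via the maximum principle, and the Riesz-Poisson formula on balls) are classical for the fractional Laplacian.
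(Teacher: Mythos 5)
Your proof is correct, and it hinges on the same mechanism as the paper's: at the critical exponent $\alpha=s$, the $s$-th power of the distance to $\partial\Omega$ carried by $g$ exactly cancels the $s$-th power in the denominator of the fractional Poisson kernel, turning an otherwise convergent integral into one that diverges like $\log(1/t)$. The concrete realization, however, is different, and in one respect more complete. The paper takes $\Omega=\mathbb{R}^2_+$ and $g(x)=\min\{|x|^s,1\}$, uses the half-space Poisson kernel, passes to polar coordinates, and gets $u(0,t)\ge c\,t^s\log(1/t)$; since $\mathbb{R}^2_+$ is unbounded, it then simply remarks at the end of the proof that a bounded counterexample can also be produced, without carrying this out. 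You work directly with the bounded $C^\infty$ domain $\Omega=B_1(e_2)$ and the compactly supported datum $g(y)=\eta(y)(-y_2)_+^s$, apply the ball Poisson kernel, and obtain the same logarithmic lower bound by restricting the integral to the region $A=\{y_1^2\le|y_2|,\ -1/4<y_2<0\}$, on which $|y-e_2|^2-1$ is comparable to $|y_2|$ and the cutoff is identically $1$. This produces a bounded smooth domain at once, which is what the proposition really wants; the only extra cost is the slightly heavier Poisson-kernel bookkeeping on the ball (e.g., $1-|te_2-e_2|^2=t(2-t)$). All the steps you sketch check out, and together with $u(0)=g(0)=0$ (continuity up to the boundary for continuous exterior data on a smooth domain) and $u\ge0$ (maximum principle), the estimate $u(te_2)\ge c\,t^s\log(1/t)$ indeed rules out $u\in C^{0,s}(\overline\Omega)$.
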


When $\alpha>s$, using known results from \cite{RosOtonSerra2017:art}, we will establish the following.
Notice that, for nonlocal operators of this type, the best H\"older regularity one can get is $C^{0,s}(\overline\Omega)$, even if $g$ and $\Omega$ are $C^\infty$; see \cite{RosOton2016:art}.
This is why the analogue of property (c) above reads as follows.

\begin{Proposition}[$\alpha>s$]\label{Proposition:MainProp2}
Let $\Omega\subset\mathbb R^N$ be any bounded $C^{1,\gamma}$ domain, $\gamma>0$, $s \in (0,1)$, $L$ as in \eqref{eq:NonlocalNonHomogenousOperator}-\eqref{eq:KernelProp}-\eqref{eq:NonlocalHomogenousOperator} and $g$ as in \eqref{eq:AssumptionsBoundaryData}-\eqref{eq:GrowthCondition}, with $\alpha>s$ and $\alpha<2s$.

Then the solution $u$ to \eqref{eq:NonlocalDirichletProblem} is of class $C^{0,s}(\overline{\Omega})$, with
\[\|u\|_{C^{0,s}(\overline\Omega)} \leq CC_0,\]
where $C$ depends only on $n$, $s$, $\lambda$, $\Lambda$, $\alpha$, and $\Omega$.
\end{Proposition}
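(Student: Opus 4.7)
The idea is to subtract an appropriate extension of $g$ in order to reduce the problem to a homogeneous Dirichlet problem whose right-hand side blows up only mildly at $\partial\Omega$, and then invoke the boundary H\"older estimates of Ros-Oton--Serra \cite{RosOtonSerra2017:art}.

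First I would build an extension $\tilde g:\mathbb R^N\to\mathbb R$ with $\tilde g=g$ in $\mathbb R^N\setminus\Omega$, $\|\tilde g\|_{C^{0,\alpha}(\overline\Omega)}\le CC_0$, and, for $x\in\Omega$,
\[
|D^2\tilde g(x)|\le CC_0\,d(x)^{\alpha-2},\qquad d(x):=\mathrm{dist}(x,\partial\Omega).
\]
This is a Whitney-type construction: a partition of unity adapted to a Whitney decomposition of $\Omega$, combined with mollification of $g$ at scale $d(x)$. The $C^{1,\gamma}$ regularity of $\partial\Omega$ ensures that the resulting $\tilde g$ is globally $C^{0,\alpha}$ with constant proportional to $C_0$, using the boundary trace assumption \eqref{eq:AssumptionsBoundaryData}.

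Now set $v:=u-\tilde g$, so that $v=0$ in $\Omega^c$ and $Lv=f$ in $\Omega$, where $f:=-L\tilde g$. I would estimate $|f(x)|$ by splitting the integral defining $L\tilde g(x)$ into the inner region $\{|y|<d(x)/4\}$ and the outer region $\{|y|\ge d(x)/4\}$. For the inner region, the symmetry of $K$ and a second-order Taylor expansion, together with the bound on $D^2\tilde g$, yield
\[
\left|\int_{|y|<d(x)/4}\Bigl(\tilde g(x)-\tfrac{\tilde g(x+y)+\tilde g(x-y)}{2}\Bigr)K(y)\,dy\right|\lesssim C_0\, d(x)^{\alpha-2}\int_{|y|<d(x)/4}|y|^{2-N-2s}dy\lesssim C_0\, d(x)^{\alpha-2s}.
\]
For the outer region, using \eqref{eq:AssumptionsBoundaryData} and \eqref{eq:GrowthCondition} (distinguishing $x+y\in\Omega$ vs $x+y\notin\Omega$, and $|y|\le 1$ vs $|y|>1$; in the last case interpolating via a point $z\in\partial\Omega$ on the segment joining $x$ and $x+y$), one checks that $|\tilde g(x+y)-\tilde g(x)|\le CC_0|y|^\alpha$. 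Hence
\[
\int_{|y|\ge d(x)/4}|\tilde g(x+y)-\tilde g(x)|K(y)\,dy\lesssim C_0\int_{d(x)/4}^{\infty}r^{\alpha-2s-1}dr\lesssim C_0\, d(x)^{\alpha-2s},
\]
where convergence at $\infty$ crucially uses $\alpha<2s$. Altogether $|f(x)|\le CC_0\,d(x)^{-(2s-\alpha)}$ in $\Omega$.

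Since $s<\alpha<2s$, the exponent $\beta:=2s-\alpha$ lies in $(0,s)$. I would then invoke the boundary H\"older estimates of \cite{RosOtonSerra2017:art}: for solutions of $Lv=f$, $v=0$ in $\Omega^c$, in $C^{1,\gamma}$ domains with $|f|\le c\,d^{-\beta}$ and $\beta<s$, one has $v\in C^{0,s}(\overline\Omega)$ with $\|v\|_{C^{0,s}(\overline\Omega)}\le Cc$. Applied to our $v$ this gives $\|v\|_{C^{0,s}(\overline\Omega)}\le CC_0$. Since $\alpha>s$, also $\tilde g\in C^{0,\alpha}(\overline\Omega)\hookrightarrow C^{0,s}(\overline\Omega)$ with norm $\le CC_0$, and therefore $u=v+\tilde g\in C^{0,s}(\overline\Omega)$ with the stated bound.

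The main obstacle is the pointwise estimate $|L\tilde g(x)|\lesssim C_0\,d(x)^{\alpha-2s}$: it requires gluing together, with constant proportional to $C_0$, the interior smoothing of $\tilde g$ quantified by the $D^2\tilde g$ bound, the boundary H\"older control \eqref{eq:AssumptionsBoundaryData}, and the growth \eqref{eq:GrowthCondition}. Once this is achieved, the final step is essentially a black-box application of the Ros-Oton--Serra boundary regularity theory, and the restriction $\alpha<2s$ is used in precisely two places: to make the far-field integral converge, and to ensure $\beta<s$ so that \cite{RosOtonSerra2017:art} applies.
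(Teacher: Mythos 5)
Your proposal is correct and follows essentially the same route as the paper: extend $g$ to $\tilde g$ with $|D^2\tilde g|\lesssim C_0\,d^{\alpha-2}$ (the paper uses the harmonic extension, Lemma~\ref{Lemma:PropExtensionBoundaryData}, where you use a Whitney-type extension, but the two yield the same key properties), show $|L\tilde g|\lesssim C_0\,d^{\alpha-2s}$ by the same inner/outer splitting (this is Lemma~\ref{Lemma:BoundFracLapExtg}), and then apply the Ros-Oton--Serra boundary estimates in $C^{1,\gamma}$ domains to $v=u-\tilde g$. The paper's proof is a two-line reduction citing precisely these lemmas and \cite[Theorem 1.2, Proposition 3.2, Remark 3.4]{RosOtonSerra2017:art}, so your argument matches it in substance.
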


Finally, when $\Omega$ is of class $C^{0,1}$ we establish the following result, analogue to (d) above.
Notice that here we do not need to assume that the kernel $K$ is homogeneous.

\begin{thm}[$\partial\Omega\in \textrm{Lip}$]\label{Theorem:MainTheoremLipschitz}
Let $\Omega\subset\mathbb R^N$ be any bounded Lipschitz domain, $s \in (0,1)$, and $L$ as in \eqref{eq:NonlocalNonHomogenousOperator}-\eqref{eq:KernelProp}.
Then, there exists $\beta_0>0$, depending only on $\Omega$, $s$, $\lambda$, and $\Lambda$, such that the following holds.
Let $g$ be as in \eqref{eq:AssumptionsBoundaryData}-\eqref{eq:GrowthCondition}, with $\alpha\in (0,\beta_0]$.

Then, the solution $u$ to \eqref{eq:NonlocalDirichletProblem} is of class $C^{0,\alpha}(\overline{\Omega})$, with
\[\|u\|_{C^{0,\alpha}(\overline\Omega)} \leq CC_0,\]
where $C$ depends only on $n$, $s$, $\lambda$, $\Lambda$, $\alpha$, and $\Omega$.
\end{thm}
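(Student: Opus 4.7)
The plan is to mirror the classical strategy used for the Laplacian in Lipschitz domains (property~(d) in the introduction), replacing the harmonic exterior-cone barrier by one adapted to $L$. The first step is to reduce to homogeneous exterior data. Extending $g|_{\mathbb{R}^N\setminus\Omega}$ via the standard H\"older extension to a function $\tilde g\in C^{0,\alpha}(\mathbb{R}^N)$ with $[\tilde g]_{C^{0,\alpha}(\mathbb{R}^N)}\leq CC_0$ and $\tilde g$ still satisfying \eqref{eq:GrowthCondition}, and setting $v:=u-\tilde g$, we obtain $v\equiv 0$ outside $\Omega$ and
\[
-Lv = L\tilde g =: f \quad\text{in } \Omega.
\]
Splitting the integral in \eqref{eq:NonlocalNonHomogenousOperator} over $\{|y|\le d(x)\}$ (where second-order cancellation combined with H\"older continuity applies) and $\{|y|\ge d(x)\}$ (handled by the growth bound), one gets the pointwise estimate $|f(x)|\le CC_0\,d(x)^{\alpha-2s}$, where $d(x):=\operatorname{dist}(x,\partial\Omega)$. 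The global bound $\|v\|_{L^\infty(\Omega)}\le CC_0$ follows from the maximum principle applied to the original problem.

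Next I would build a barrier from the exterior cone. Because $\Omega$ is Lipschitz, there exist $\rho_0,\theta>0$ such that every $x_0\in\partial\Omega$ admits a truncated exterior cone $\mathcal{C}_{x_0}\subset\mathbb{R}^N\setminus\Omega$ of opening $\theta$ and height $\rho_0$. The aim is to produce a non-negative function $\phi_{x_0}\in C^{0,\beta_0}(\mathbb{R}^N)$, for some universal $\beta_0=\beta_0(\theta,s,\lambda,\Lambda)>0$, such that $\phi_{x_0}(x_0)=0$, $\phi_{x_0}(x)\le C|x-x_0|^{\beta_0}$ everywhere, $\phi_{x_0}\ge c_0>0$ on $\{|x-x_0|\ge\rho_0/2\}$, and
\[
-L\phi_{x_0}(x)\ge c\,d(x)^{-2s} \quad\text{in } \Omega\cap B_{\rho_0}(x_0).
\]
When $K$ is homogeneous this can be achieved with a $\beta_0$-homogeneous $L$-superharmonic function vanishing on $\mathcal{C}_{x_0}$; in the general setting \eqref{eq:KernelProp}, one would construct $\phi_{x_0}$ by a dyadic scale-by-scale iteration that exploits the uniform ellipticity on each annulus to preserve a common $C^{0,\beta_0}$ decay.

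With the barrier in hand, the conclusion is obtained by comparison. Let $\alpha\in(0,\beta_0]$ and fix $x_0\in\partial\Omega$; set $w(x):=C_\ast C_0\phi_{x_0}(x)$ with $C_\ast$ large enough. Then the lower bound on $\phi_{x_0}$ combined with $\|v\|_{L^\infty}\le CC_0$ gives $w\ge |v|$ on $\mathbb{R}^N\setminus\bigl(\Omega\cap B_{\rho_0/2}(x_0)\bigr)$, while
\[
-Lw(x)\ge C_\ast C_0\,c\,d(x)^{-2s}\ge |f(x)|\quad\text{in }\Omega\cap B_{\rho_0/2}(x_0),
\]
since $d^{\alpha-2s}\le d^{-2s}$ for $d\le\rho_0$. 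The weak maximum principle for $-L$ then yields $|v(x)|\le CC_0|x-x_0|^{\alpha}$ for $x\in \Omega\cap B_{\rho_0/2}(x_0)$. Coupling this boundary decay with standard interior H\"older estimates for $v$ (which hold by scaling since $f$ is locally bounded away from $\partial\Omega$) and the $C^{0,\alpha}$ regularity of $\tilde g$, one deduces $u=v+\tilde g\in C^{0,\alpha}(\overline\Omega)$ with the claimed norm bound.

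The main obstacle is the barrier construction in the full generality of \eqref{eq:KernelProp}, where $K$ need not be homogeneous. Non-homogeneity prevents one from simply blowing up at $x_0$ and using a homogeneous $L$-harmonic function on the complement of the cone, so the barrier must be assembled across dyadic scales while ensuring that the same H\"older exponent $\beta_0$ works at every scale; the uniform ellipticity of $K$ is precisely what permits such a construction and makes $\beta_0$ depend only on the cone opening and on $s,\lambda,\Lambda$, as required.
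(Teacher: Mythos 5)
The high-level scheme (extend $g$, reduce to $Lv=f$ with $v=0$ outside $\Omega$ and $|f|\leq Cd^{\alpha-2s}$, build a cone barrier, compare) matches the paper. The extension step is a valid variant: the paper uses the harmonic extension (Lemma~\ref{Lemma:PropExtensionBoundaryDataLip}), which in a Lipschitz domain costs the replacement $\alpha\mapsto\beta=\min\{\alpha,\alpha_0\}$, whereas your Whitney-type extension would avoid that loss. One caveat: the hypothesis~\eqref{eq:AssumptionsBoundaryData} only controls $|g(x)-g(z)|$ for $z\in\partial\Omega$, so $g$ need not lie in $C^{0,\alpha}(\mathbb{R}^N\setminus\Omega)$; you should extend from $\partial\Omega$ inward and leave $g$ untouched outside, rather than extending a globally H\"older $g|_{\mathbb{R}^N\setminus\Omega}$.

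The genuine gap is the barrier. You correctly identify it as the hard part, but you do not construct it, and the properties you list are not mutually consistent: a function vanishing at $x_0$ with $\phi_{x_0}(x)\leq C|x-x_0|^{\beta_0}$ cannot satisfy $-L\phi_{x_0}\geq c\,d^{-2s}$ near $x_0$ --- by scaling, a $\beta_0$-H\"older barrier can at best produce $-L\phi_{x_0}\gtrsim d^{\beta_0-2s}$, and since $\beta_0\geq\alpha$ one has $d^{\beta_0-2s}\leq d^{\alpha-2s}$ for small $d$, which is the \emph{wrong} side of the inequality you need to dominate $|f|$. The remedy (and what the paper does) is to use, for each $\alpha\in(0,\beta_0]$, the barrier $\Phi_\alpha=(\psi_+)^\alpha$ with exponent matching $\alpha$, and to prove $-L\Phi_\alpha\geq c_0 d^{\alpha-2s}$ with $c_0$ uniform in $\alpha\in(0,\beta_0]$. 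Moreover, your claim that ``non-homogeneity prevents one from simply blowing up at $x_0$'' overlooks the mechanism that actually makes the paper's argument work: rescaling does not preserve $L$, but it does preserve the ellipticity class~\eqref{eq:KernelProp}, so it suffices to prove $-L\Phi_\alpha\geq C>0$ on a fixed shifted sphere $e+\partial\mathcal{C}_{-\eta}$ \emph{uniformly over all kernels in the class}. The paper does this via a compactness/limiting argument: as $\alpha\to0^+$, $\Phi_\alpha\to\chi_{\mathcal{C}_{-\eta}}$ pointwise with dominated derivatives, $-L\chi_{\mathcal{C}_{-\eta}}=\int_{\mathbb{R}^N\setminus\mathcal{C}_{-\eta}}K(x-y)\,dy\geq c>0$ uniformly on the reference set, and the convergence $-L\Phi_\alpha\to-L\chi_{\mathcal{C}_{-\eta}}$ is shown to be uniform in $x$ and in $L$, yielding $\beta_0$. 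Your proposed ``dyadic scale-by-scale iteration'' is a plausible alternative heuristic but is not carried out, and without it the proof does not close.

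There is also a minor inaccuracy in the comparison set-up: $\phi_{x_0}$ vanishes on the full exterior cone (not only the truncated one), so the stated lower bound $\phi_{x_0}\geq c_0$ on $\{|x-x_0|\geq\rho_0/2\}$ cannot hold; the paper avoids this by truncating $v$ to $w=v\chi_{B_{2r}(z_0)}$ and running the comparison on $B_r(z_0)\cap\Omega$ only, which you should do as well.
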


The strategy in our proof of the $C^{0,\alpha}$ regularity of $u$ is as follows.
The basic idea is to extend the exterior data $g$ to a function $\overline{g}$, defined in $\mathbb R^N$, and such that it is as regular as it can be inside $\Omega$.
Then, we show that $|L\overline{g}| \leq Cd^{\alpha-2s}$ in $\Omega$, where $d(x) := \text{dist}\left(x,\mathbb{R}^N\setminus\Omega \right)$.
Thanks to this, defining $v = u - \overline{g}$, we are led to the study of the problem
\begin{equation}\label{eq:SecondNonlocalDirichletProblem}
\left\{\begin{array}{rcll}
L v &=&f \quad &\text{in } \Omega \\
v& =& 0             \quad &\text{in } \mathbb{R}^N\setminus\Omega,
\end{array}\right.
\end{equation}
with $|f| \leq Cd^{\alpha-2s}$ in $\Omega$.
We then prove regularity properties up to the boundary of solutions $v$ to \eqref{eq:SecondNonlocalDirichletProblem}, and show that
\[
\|v\|_{C^{0,\alpha}(\overline{\Omega})} \leq C \|d^{2s-\alpha} f\|_{L^\infty(\Omega)},
\]
for some $C > 0$. To do this, we need to construct fine barriers, which must take into account two important features: first, $f$ is very singular near the boundary $\partial\Omega$; and second, the domain $\Omega$ is only $C^1$ (or $C^{0,1}$).

\begin{Remark}
In the statements above, we have not specified the nature of the solutions we consider (weak or viscosity). 
Whenever weak or viscosity solutions exist, our results apply to them. Actually, using our new regularity results it is easy to prove the existence of a viscosity solution to the problem \eqref{eq:NonlocalDirichletProblem} under our assumptions on $L$, $\Omega$, and $g$; see Corollary~\ref{cor-existence}.
\end{Remark}

The paper is organized as follows.
In Section \ref{Section:PreliminaryResults} we present some preliminary results we will employ later in the main proofs.
Section \ref{Section:C1Domains} is the core of the paper: we prove Theorem \ref{Theorem:MainTheorem} and Propositions \ref{Proposition:MainProp} and \ref{Proposition:MainProp2}.
Finally, in Section \ref{Section:LipschitzDomains} we consider domains of class $C^{0,1}$, showing Theorem \ref{Theorem:MainTheoremLipschitz}.

%
%
%
%
%
%
%
%
%
%
%
\section{Preliminary results} \label{Section:PreliminaryResults}

This section is devoted to the proof of some preliminary results.
The first one is a $L^{\infty}$ bound on (weak) solutions, based on the maximum principle.
%
%
%
%
%
\begin{Lemma}\label{Lemma:LInfinityBound}
Let $\Omega \subset \mathbb{R}^N$ be any bounded domain, $s \in (0,1)$, $\alpha \in (0,2s)$, $L$ as in \eqref{eq:NonlocalNonHomogenousOperator}-\eqref{eq:KernelProp}, and $g$ satisfying \eqref{eq:GrowthCondition}.

Then, the solution $u$ to \eqref{eq:NonlocalDirichletProblem} is bounded in $\Omega$ and satisfies
\[
\|u\|_{L^{\infty}(\Omega)} \leq CC_0,
\]
with $C$ depending only on $N$, $s$, $\alpha$, $\Omega$, and the ellipticity constants.
\end{Lemma}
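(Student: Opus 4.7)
The plan is to bound $u$ by comparison with an explicit global barrier. I will construct a function $w$ on $\mathbb{R}^N$ that dominates $g$ outside $\Omega$, satisfies $-Lw \geq 0 = -Lu$ inside $\Omega$, and is bounded by $CC_0$ on $\overline\Omega$. The weak maximum principle for nonlocal operators of the form \eqref{eq:NonlocalNonHomogenousOperator}--\eqref{eq:KernelProp} then yields $u \leq w \leq CC_0$ in $\Omega$, and running the same argument with $-g$ in place of $g$ gives the symmetric lower bound.

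The barrier $w$ will be assembled from two explicit pieces. The first is an envelope $\phi(x) := (1 + |x|^2)^{\alpha/2}$ designed to majorize the growth of $g$: since $2\phi(x) \geq 1 + |x|^\alpha$ on $\mathbb{R}^N$, condition \eqref{eq:GrowthCondition} forces $|g| \leq 2 C_0 \phi$ outside $\Omega$. The crucial point is the uniform estimate $\|L\phi\|_{L^\infty(\mathbb{R}^N)} \leq C_1$, which I would obtain by splitting the defining integral at $|y|=1$: the local part is controlled by $\|D^2\phi\|_\infty \int_{B_1} |y|^{2-N-2s}\,dy$, finite because $s<1$; the tail is controlled using $\phi(x\pm y) \lesssim 1 + |x|^\alpha + |y|^\alpha$ together with $\int_{|y|\geq 1} |y|^{\alpha - N - 2s}\,dy < \infty$, which is precisely where the hypothesis $\alpha < 2s$ enters. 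Since $\alpha<2$, one has $\|D^2\phi\|_{L^\infty(\mathbb{R}^N)} < \infty$, so $C_1$ depends only on $N, s, \alpha, \lambda, \Lambda$.

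The second piece is a torsion-type corrector $\psi$ solving $-L\psi = 1$ in $\Omega$ with $\psi = 0$ in $\mathbb{R}^N \setminus \Omega$; existence and the bound $0 \leq \psi \leq C_2(\Omega, s, \lambda, \Lambda)$ follow by comparison with an explicit ball barrier of the form $c(R^2 - |x|^2)_+^s$ on a ball $B_R \supset \Omega$. Setting
\[
w(x) := 2 C_0\,\phi(x) + 2 C_1 C_0\,\psi(x),
\]
the exterior condition $w \geq g$ follows from $w \geq 2C_0 \phi$ and the envelope inequality, while inside $\Omega$,
\[
-Lw = 2C_0\,(-L\phi) + 2 C_1 C_0 \geq -2C_0 C_1 + 2 C_1 C_0 = 0.
\]
Since $\Omega$ is bounded, $\phi$ and $\psi$ are bounded on $\overline\Omega$ by a constant depending only on $\Omega, N, s, \alpha$, giving $w \leq C C_0$ there and concluding the argument via comparison. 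The main technical obstacle is the uniform estimate $\|L\phi\|_{L^\infty(\mathbb{R}^N)} \leq C_1$, in which the assumption $\alpha < 2s$ is used critically to ensure integrability of the tail; everything else reduces to standard maximum principle and torsion-barrier arguments and is independent of the homogeneity assumption \eqref{eq:NonlocalHomogenousOperator}.
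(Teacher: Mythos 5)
Your proof is correct, but it takes a genuinely different route from the paper. The paper truncates the exterior datum at a radius $R_0$ with $\Omega\subset\subset B_{R_0/2}$, writing $g=g_1+g_2$ with $g_1=g\chi_{B_{R_0}\setminus\Omega}$ bounded and $g_2=g\chi_{\mathbb R^N\setminus B_{R_0}}$ supported away from $\Omega$; then $w:=u-g_2$ solves $-Lw=Lg_2$ in $\Omega$ with bounded exterior datum $g_1$, and since $g_2$ vanishes identically in a neighborhood of $\Omega$, the bound $|Lg_2|\leq C$ in $\Omega$ is immediate (no cancellation needed) and one invokes a known $L^\infty$ estimate from \cite{RosOton2016:art}. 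You instead build an explicit global barrier $w=2C_0\phi+2C_1C_0\psi$ with $\phi=(1+|x|^2)^{\alpha/2}$ a growth envelope and $\psi$ a torsion function, and apply the comparison principle directly. Both approaches use $\alpha<2s$ at the same spot (integrability of the tail) and both yield constants depending only on $N,s,\alpha,\Omega,\lambda,\Lambda$; the paper's argument is shorter by deferring to the cited result, while yours is self-contained and essentially unwinds the barrier that underlies that citation.

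One small inaccuracy in your write-up: the argument you give (bounding $\phi(x\pm y)\lesssim 1+|x|^\alpha+|y|^\alpha$ and integrating against $|y|^{-N-2s}$ over $|y|\geq 1$) produces $|L\phi(x)|\lesssim 1+|x|^\alpha$, not the uniform bound $\|L\phi\|_{L^\infty(\mathbb R^N)}\leq C_1$ you assert. The global bound does hold, but it needs an extra cancellation step for $|x|$ large (Taylor expansion on $1<|y|<|x|/2$ using $|D^2\phi(z)|\lesssim |z|^{\alpha-2}$ there, plus the crude bound on $|y|>|x|/2$). However, this does not affect your proof: you only evaluate $L\phi$ at points $x\in\Omega$, and since $\Omega$ is bounded the estimate $|L\phi(x)|\leq C(1+\mathrm{diam}(\Omega)^\alpha)=:C_1$ is exactly what the comparison needs, with $C_1$ depending on $\Omega$ as permitted. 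You should phrase the key estimate as $\|L\phi\|_{L^\infty(\Omega)}\leq C_1$ rather than on all of $\mathbb R^N$, or else supply the finer splitting.
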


\begin{proof}
First notice that, dividing by a constant if necessary, we may assume that $C_0=1$. Since the function $g$ is not assumed to be bounded, the boundedness of $u$ in $\Omega$ does not follow immediately. Thus, we construct an equivalent version of problem \eqref{eq:NonlocalDirichletProblem}, with a nontrivial r.h.s. $f\in L^\infty(\Omega)$ and a new exterior data $g_1 \in L^\infty(\mathbb{R}^N\setminus\Omega)$. Then we will apply some known $L^\infty$ estimates, obtaining the proof of our statement.

Let $R_0 > 0$ be such that $\Omega \subset B_{R_0/2}$, and we consider the functions
\[
g_1(x) := g\, \chi_{B_{R_0} \setminus \Omega}
\qquad \textrm{and} \qquad
g_2 := g\, \chi_{\mathbb{R}^N \setminus B_{R_0}},
\]
so that $g = (g_1+g_2)|_{\mathbb{R}^N \setminus \Omega}$. The function $w := u - g_2$ satisfies
\[
\left\{\begin{array}{rcll}
-Lw &=& f \quad &\text{in } \Omega \\
w& =& g_1             \quad &\text{on } \mathbb{R}^N\setminus\Omega,
\end{array}\right.
\]
with $f := Lg_2$.
Now, since $g_2$ is supported outside $B_{R_0}$, and $B_{R_0}\supset\supset\Omega$, then it is easy to check that $|f|\leq C$ in $\Omega$.
Therefore, since $|g_1|\leq C$ in $\mathbb{R}^N\setminus\Omega$, it follows from \cite[Corollary 5.2]{RosOton2016:art} that $w \in L^{\infty}(\Omega)$, and thus $u \in L^{\infty}(\Omega)$.
\end{proof}

The second one gives an extension $g$ in $\Omega$, which is as smooth as possible inside~$\Omega$.

\begin{Lemma}\label{Lemma:PropExtensionBoundaryData}
Let $\Omega\subset \mathbb R^N$ be any bounded $C^1$ domain, and $g$ be as in \eqref{eq:AssumptionsBoundaryData}, with $\alpha\in(0,1)$.
Then, there exists a function $\overline{g} \in C^{0,\alpha}(\overline{\Omega}) \cap C^{\infty}(\Omega)$ such that
\begin{equation}\label{eq:BoundSecondDerivativesExtension}
\begin{array}{rcll}
\overline{g} &=& g \quad &\text{in } \mathbb{R}^N \setminus \Omega, \\
|D^2 \overline{g}| &\leq & Cd^{\alpha-2}            \quad &\text{in } \Omega,
\end{array}
\end{equation}
where $C$ depends only on $N$, $\alpha$, and $\Omega$.
\end{Lemma}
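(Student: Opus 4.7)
\medskip

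\textbf{Plan.} The natural tool is a Whitney-type extension. The observation that makes everything work is that, even though \eqref{eq:AssumptionsBoundaryData} only controls $g$ relative to boundary points, the restriction $g|_{\partial\Omega}$ is automatically in $C^{0,\alpha}(\partial\Omega)$ (taking $x\in\partial\Omega\subset\mathbb{R}^N\setminus\Omega$ in \eqref{eq:AssumptionsBoundaryData} gives $|g(z_1)-g(z_2)|\le C_0|z_1-z_2|^\alpha$). Outside $\Omega$ I will simply set $\bar g=g$; inside $\Omega$ I will build a smooth Whitney-type extension of $g|_{\partial\Omega}$. The condition $\bar g=g$ on $\partial\Omega$ gives continuity across $\partial\Omega$, and the Hessian estimate will follow from the standard partition-of-unity trick.

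\medskip

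\textbf{Construction.} First I would fix a Whitney decomposition $\{Q_k\}_{k\ge 1}$ of $\Omega$ into essentially disjoint dyadic cubes with side lengths $\ell_k$ satisfying $c_1\ell_k\le\mathrm{dist}(Q_k,\partial\Omega)\le c_2\ell_k$, together with a subordinate smooth partition of unity $\{\varphi_k\}$ on enlarged cubes $Q_k^*$, with $\sum_k\varphi_k\equiv 1$ in $\Omega$, uniformly bounded overlap, and
\[
|D^j\varphi_k|\le C\,\ell_k^{-j},\qquad j\ge 0.
\]
For each $k$ I choose a point $p_k\in\partial\Omega$ with $\mathrm{dist}(Q_k^*,p_k)\le C\ell_k$, and define
\[
\bar g(x)=\begin{cases} g(x), & x\in\mathbb{R}^N\setminus\Omega,\\[2pt] \displaystyle\sum_{k}\varphi_k(x)\,g(p_k), & x\in\Omega.\end{cases}
\]
Smoothness inside $\Omega$ is automatic since the sum is locally finite.

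\medskip

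\textbf{Hessian bound.} Fix $x\in\Omega$, set $d=d(x)$, and let $\mathcal{I}(x)$ be the set of indices with $x\in\mathrm{supp}\,\varphi_k$; by construction $\#\mathcal{I}(x)\le C$ and $\ell_k\sim d$, $|p_k-p_{k_0}|\le C d$ for any two $k,k_0\in\mathcal{I}(x)$. Since $\sum_k\varphi_k\equiv 1$ near $x$, one has $\sum_k D^j\varphi_k\equiv 0$ for $j\ge 1$, so for any fixed $k_0\in\mathcal{I}(x)$,
\[
D^j\bar g(x)=\sum_{k\in\mathcal{I}(x)}D^j\varphi_k(x)\bigl[g(p_k)-g(p_{k_0})\bigr].
\]
Using \eqref{eq:AssumptionsBoundaryData} and the derivative bounds for $\varphi_k$,
\[
|D^j\bar g(x)|\le C\sum_{k\in\mathcal{I}(x)}\ell_k^{-j}\,|p_k-p_{k_0}|^\alpha\le C\,d(x)^{\alpha-j},
\]
which is precisely \eqref{eq:BoundSecondDerivativesExtension} for $j=2$.

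\medskip

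\textbf{$C^{0,\alpha}$ up to the boundary.} Taking $j=0$ in the above (and using $|g(p_k)|$ bounded on the compact set $\partial\Omega$) I will first record the pointwise bound $|\bar g(x)-g(p_{k_0})|\le C d(x)^\alpha$ for $x\in\Omega$ and any $p_{k_0}\in\partial\Omega$ with $|x-p_{k_0}|\le C d(x)$. Continuity across $\partial\Omega$ is then immediate. For the Hölder seminorm I would split into cases: (i) if $x,y\in\Omega$ with $|x-y|\le\tfrac12 d(x)$, integrate $|\nabla\bar g|\le C d^{\alpha-1}$ along the segment $[x,y]$ to get $|\bar g(x)-\bar g(y)|\le C|x-y|^\alpha$; (ii) if $x,y\in\Omega$ with $|x-y|>\tfrac12 d(x)$, pick nearest boundary points $p_x,p_y$, note that $|p_x-p_y|\le C|x-y|$, and combine the pointwise bound with \eqref{eq:AssumptionsBoundaryData}; (iii) the mixed case $x\in\Omega$, $y\in\mathbb{R}^N\setminus\Omega$ reduces to (ii) since $|x-p_x|\le|x-y|$. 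The boundary case $x,y\in\mathbb{R}^N\setminus\Omega$ near $\partial\Omega$ is handled by the Hölder property of $g|_{\partial\Omega}$ together with \eqref{eq:AssumptionsBoundaryData}.

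\medskip

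\textbf{Main obstacle.} There is no serious analytic obstruction, only bookkeeping: one must ensure that the $p_k$ are chosen consistently so that comparable cubes have comparable boundary anchors, and handle the matching of the interior construction with the unaltered exterior values across $\partial\Omega$. The $C^1$ regularity of $\Omega$ is used only mildly (to guarantee nearest-point projections and clean estimates like $|p_x-p_y|\le C|x-y|$); the Whitney part works on any open set.
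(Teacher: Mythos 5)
Your proof is correct, but it follows a genuinely different route from the paper's. The paper simply takes $\overline g$ to be the harmonic extension of $g|_{\partial\Omega}$ inside $\Omega$: because $\Omega$ is $C^1$ and $g|_{\partial\Omega}\in C^{0,\alpha}(\partial\Omega)$, classical potential theory for the Dirichlet problem gives $\overline g\in C^{0,\alpha}(\overline\Omega)\cap C^\infty(\Omega)$, and the Hessian bound $|D^2\overline g|\le C\,d^{\alpha-2}$ is then obtained by applying interior derivative estimates for harmonic functions to $\overline g-\overline g(x_0)$ on the ball $B_{d(x_0)/2}(x_0)$, using the already-established $C^{0,\alpha}(\overline\Omega)$ bound to control the $L^\infty$ norm there (this is what the paper's footnote sketches). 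You instead construct a Whitney-type extension anchored at boundary points $p_k$ and obtain the derivative decay from the cancellation $\sum_k D^j\varphi_k\equiv 0$ for $j\ge1$; I checked the steps and they go through (the one detail worth making explicit is that for $k\in\mathcal I(x)$ and the slightly dilated cubes $Q_k^*$, one still has $d(x)\sim \ell_k$ and $|p_k-p_{k_0}|\le Cd(x)$, which is what drives the estimate). Your approach is more elementary and entirely self-contained, and it is in fact strictly more general: the Whitney construction and all the estimates in your Hessian and H\"older steps work on \emph{any} bounded open set and preserve the full exponent $\alpha$, with no use of the $C^1$ hypothesis. By contrast, the harmonic-extension route needs $C^1$ regularity to avoid losing the exponent — indeed the paper's companion Lemma for Lipschitz domains has to settle for $\beta=\min\{\alpha,\alpha_0\}$ precisely because the harmonic extension only gives $C^{0,\beta}$ there, whereas your construction would give $C^{0,\alpha}(\overline\Omega)$ directly. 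What the paper's route buys is brevity: it invokes a standard black box and is done in two lines, at the cost of the (mild) extra hypothesis.
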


\begin{proof}
We consider the solution of $\Delta \overline{g} = 0$ in $\Omega$, $\overline{g} = g$ on $\partial\Omega$.
Since $g\in C^{0,\alpha}(\partial\Omega)$, and $\Omega$ is of class $C^1$, it follows from standard regularity theory that $\overline{g} \in C^{\infty}(\Omega)\cap C^{0,\alpha}(\overline{\Omega})$ and that\footnote{This can be shown by using that $\overline{g} \in C^{0,\alpha}(\overline{\Omega})$, standard elliptic regularity estimates, and the fact that the function $\overline{g} - \overline{g}(x_0)$ is harmonic in $\Omega$, for any choice of $x_0$.} $|D^2 \overline{g}| \leq C d^{\alpha-2}$ in $\Omega$.
\end{proof}

\begin{Lemma}\label{Lemma:PropExtensionBoundaryDataLip}
Let $\Omega\subset \mathbb R^N$ be any bounded Lipschitz domain.
Then, there exists $\alpha_0=\alpha_0(\Omega)$ such that the following holds.
Let $g$ be as in \eqref{eq:AssumptionsBoundaryData}, with $\alpha\in(0,1)$.

Then, there exists a function $\overline{g} \in C^{0,\beta}(\overline{\Omega}) \cap C^{\infty}(\Omega)$ such that
\begin{equation}\label{eq:BoundSecondDerivativesExtensionLip}
\begin{array}{rcll}
\overline{g} &=& g \quad &\text{in } \mathbb{R}^N \setminus \Omega, \\
|D^2 \overline{g}| &\leq & Cd^{\beta-2}            \quad &\text{in } \Omega,
\end{array}
\end{equation}
with $\beta := \min\{\alpha,\alpha_0\}$.
The constant $C > 0$ depends only on $N$, $\alpha$ and $\Omega$.
\end{Lemma}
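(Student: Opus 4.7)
The plan is to follow the same strategy as in the proof of Lemma \ref{Lemma:PropExtensionBoundaryData}, namely to take $\overline{g}$ to be the harmonic extension of $g|_{\partial\Omega}$: let $\overline{g}$ solve
\[
\Delta \overline{g} = 0 \ \ \text{in }\Omega, \qquad \overline{g} = g \ \ \text{on }\partial\Omega,
\]
and extend $\overline{g}$ by $g$ outside $\Omega$. Then $\overline{g} \in C^\infty(\Omega)$ automatically, and the first condition in \eqref{eq:BoundSecondDerivativesExtensionLip} is built in by construction, so the only issues are to verify the global Hölder regularity on $\overline\Omega$ and the pointwise second-derivative bound inside $\Omega$.

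For the global regularity, the point is that the $C^1$-regularity of $\partial\Omega$ used in Lemma \ref{Lemma:PropExtensionBoundaryData} now has to be replaced by the Lipschitz analogue, which is precisely property (d) recalled in the introduction. Concretely, for any bounded Lipschitz domain there exists an exponent $\alpha_0 = \alpha_0(\Omega) \in (0,1]$ (depending only on the Lipschitz character) such that the harmonic extension of any $C^{0,\alpha}(\partial\Omega)$ datum belongs to $C^{0,\min\{\alpha,\alpha_0\}}(\overline\Omega)$, with a bound on the norm depending only on $C_0$, $\alpha$, and $\Omega$ (cf.\ Kenig's book, as in property (d)). This is the step where we fix $\alpha_0$, set $\beta := \min\{\alpha,\alpha_0\}$, and obtain $\overline{g} \in C^{0,\beta}(\overline\Omega)$ with the correct norm estimate.

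For the interior second-derivative bound, I use the standard rescaling argument sketched in the footnote of Lemma \ref{Lemma:PropExtensionBoundaryData}: fix $x_0 \in \Omega$ and set $r := d(x_0)/2 = \tfrac12 \mathrm{dist}(x_0,\partial\Omega)$. Pick any $z_0 \in \partial\Omega$ realizing $d(x_0)$. Then $B_r(x_0) \subset \Omega$ and for every $x \in B_r(x_0)$,
\[
|\overline{g}(x) - \overline{g}(x_0)| \le |\overline{g}(x) - \overline{g}(z_0)| + |\overline{g}(z_0) - \overline{g}(x_0)| \le C\,|x - z_0|^{\beta} + C\,|x_0 - z_0|^{\beta} \le C\, r^{\beta},
\]
by the global $C^{0,\beta}$-bound from the previous step. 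Since $\overline{g} - \overline{g}(x_0)$ is harmonic in $B_r(x_0)$, interior derivative estimates for harmonic functions give
\[
|D^2 \overline{g}(x_0)| \le \frac{C}{r^{2}} \|\overline{g} - \overline{g}(x_0)\|_{L^\infty(B_r(x_0))} \le C\, r^{\beta - 2} \le C\, d(x_0)^{\beta - 2},
\]
which is the second estimate in \eqref{eq:BoundSecondDerivativesExtensionLip}.

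The only delicate ingredient is the boundary Hölder regularity for the Dirichlet problem in a Lipschitz domain, but this is a classical result; everything else is a verbatim repetition of the $C^1$ case in Lemma \ref{Lemma:PropExtensionBoundaryData}.
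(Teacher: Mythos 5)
Your proposal is correct and takes essentially the same route as the paper: take the harmonic extension of $g|_{\partial\Omega}$, invoke the classical boundary H\"older regularity for the Dirichlet problem on a Lipschitz domain to fix $\alpha_0$ and obtain $\overline{g}\in C^{0,\beta}(\overline\Omega)$, and then apply the standard rescaling/interior gradient estimate (the argument the paper keeps in the footnote to Lemma~\ref{Lemma:PropExtensionBoundaryData}) to get $|D^2\overline{g}|\leq Cd^{\beta-2}$. You have simply spelled out in full what the paper's one-line proof and footnote compress.
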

\begin{proof} The proof is that of Lemma \ref{Lemma:PropExtensionBoundaryData}, recalling that when $\Omega$ is Lipschitz, then the harmonic extension of $g\in C^{0,\alpha}(\partial\Omega)$ satisfies $\overline{g} \in C^{0,\beta}(\overline{\Omega})$, with $\beta=\min\{\alpha,\alpha_0\}$.
\end{proof}

We next compute the operator $L$ evaluated on the extension $\overline g$ constructed above.

\begin{Lemma}\label{Lemma:BoundFracLapExtg}
(a) Let $\Omega\subset \mathbb R^N$ be any bounded $C^1$ domain, $s \in (0,1)$, $\alpha \in (0,2s)$, $L$ as in \eqref{eq:NonlocalNonHomogenousOperator}-\eqref{eq:KernelProp}-\eqref{eq:NonlocalHomogenousOperator}, $g$ as in \eqref{eq:AssumptionsBoundaryData}-\eqref{eq:GrowthCondition}, and $\overline{g}$ be given by Lemma \ref{Lemma:PropExtensionBoundaryData}.
Then,
\[
|L\overline{g}| \leq CC_0 d^{\alpha-2s} \quad \text{ in } \Omega,
\]
for some constant $C$ depending only on $N$, $s$, $\alpha$, $\Omega$, and the ellipticity constants.

(b) Let $\Omega\subset \mathbb R^N$ be any bounded Lipschitz domain,  $s \in (0,1)$, $\alpha \in (0,2s)$, $L$ as in \eqref{eq:NonlocalNonHomogenousOperator}-\eqref{eq:KernelProp}, and $g$ be as in \eqref{eq:AssumptionsBoundaryData}-\eqref{eq:GrowthCondition}, and $\overline{g}$ and $\alpha_0$ be given by Lemma \ref{Lemma:PropExtensionBoundaryDataLip}.
Then,
\[
|L\overline{g}| \leq CC_0 d^{\beta-2s} \quad \text{ in } \Omega,
\]
where $\beta=\min\{\alpha,\alpha_0\}$.
The constant $C$ depends only on $N$, $s$, $\alpha$, $\Omega$, and the ellipticity constants.
\end{Lemma}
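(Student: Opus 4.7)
The plan is to estimate $L\overline{g}(x)$ pointwise for $x\in\Omega$ by splitting the principal value integral into a near-diagonal and a far piece, using the symmetry $K(y)=K(-y)$ and the two bounds on $\overline{g}$ supplied by Lemma \ref{Lemma:PropExtensionBoundaryData} (resp.\ Lemma \ref{Lemma:PropExtensionBoundaryDataLip}): an interior $C^2$-bound $|D^2\overline{g}|\leq Cd^{\alpha-2}$ and a global $\alpha$-H\"older estimate. Set $r:=d(x)/2$ and write
\[
-L\overline{g}(x)=\int_{|y|<r}\!\Bigl(\overline{g}(x)-\tfrac{\overline{g}(x+y)+\overline{g}(x-y)}{2}\Bigr)K(y)\,dy+\int_{|y|\geq r}\!\Bigl(\cdots\Bigr)K(y)\,dy=:I_1(x)+I_2(x).
\]

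For $I_1$ I would Taylor-expand: since the first-order terms cancel by symmetry, the integrand is controlled by $\tfrac12\sup_{t\in[0,1]}|D^2\overline{g}(x\pm ty)|\,|y|^2$. For $|y|<r$ and $t\in[0,1]$, the points $x\pm ty$ lie in $B_r(x)\subset\Omega$ with $d(x\pm ty)\geq d(x)/2$, so \eqref{eq:BoundSecondDerivativesExtension} gives $|D^2\overline{g}(x\pm ty)|\leq C\,d(x)^{\alpha-2}$. Combined with $K(y)\leq \Lambda|y|^{-N-2s}$ and $2s<2$,
\[
|I_1(x)|\leq C\,d(x)^{\alpha-2}\int_{|y|<r}|y|^{2-N-2s}\,dy\;\leq\;C\,d(x)^{\alpha-2}\,r^{2-2s}\;\leq\;C\,d(x)^{\alpha-2s}.
\]

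For $I_2$ the key input is a \emph{global} bound $|\overline{g}(x+y)+\overline{g}(x-y)-2\overline{g}(x)|\leq CC_0|y|^\alpha$, valid for every $x\in\overline{\Omega}$ and $y\in\mathbb R^N$. This follows by gluing: (i) $\overline{g}\in C^{0,\alpha}(\overline{\Omega})$ with $[\overline g]_{C^{0,\alpha}(\overline\Omega)}\leq CC_0$; (ii) if $x+y\notin\Omega$, project $x+y$ onto its nearest point $z\in\partial\Omega$, so $|(x+y)-z|\leq|y|$ and $|x-z|\leq 2|y|$, and apply \eqref{eq:AssumptionsBoundaryData} together with (i) to the decomposition $\overline{g}(x+y)-\overline{g}(x)=[g(x+y)-g(z)]+[\overline{g}(z)-\overline{g}(x)]$; (iii) for $|y|$ of order the diameter or larger, the growth \eqref{eq:GrowthCondition} together with the boundedness of $\overline{g}$ on $\overline{\Omega}$ still yields the same bound (absorbing constants into $CC_0|y|^\alpha$). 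Using $\alpha<2s$,
\[
|I_2(x)|\leq CC_0\int_{|y|\geq r}|y|^{\alpha-N-2s}\,dy\;\leq\;CC_0\,r^{\alpha-2s}\;\leq\;CC_0\,d(x)^{\alpha-2s}.
\]
Adding the two contributions yields part (a). For part (b) the same argument applies verbatim with $\alpha$ replaced by $\beta=\min\{\alpha,\alpha_0\}$ in the H\"older estimates on $\overline{g}$; the assumption $\alpha<2s$ guarantees $\beta<2s$, so the outer integral still converges, and I note that homogeneity of $K$ was never used in either step.

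The main technical nuisance will not be the two integral estimates, which are standard once the pieces are in place, but rather setting up the global second-difference H\"older inequality used in $I_2$: it requires carefully patching the interior H\"older norm of the harmonic extension with the exterior hypothesis \eqref{eq:AssumptionsBoundaryData} via a boundary projection, and separately handling the regime $|y|\gg 1$ where only the growth condition \eqref{eq:GrowthCondition} is available.
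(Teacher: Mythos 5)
Your proof is correct and follows essentially the same near/far decomposition as the paper: split at $|y|\sim d(x)$, control the near piece by the $|D^2\overline{g}|\leq Cd^{\alpha-2}$ bound, and control the far piece by an $\alpha$-H\"older estimate, with neither step using homogeneity of $K$. The only variation is in the far integral: the paper compares both $\overline{g}(x\pm y)$ to $\overline{g}(z_0)$ for the single nearest boundary point $z_0$ of $x$ and invokes \eqref{eq:AssumptionsBoundaryData} directly (since $z_0\in\partial\Omega$ and $x\pm y\notin\Omega$), bounding $|x\pm y-z_0|^\alpha\leq(\varrho+|y|)^\alpha\leq C|y|^\alpha$ for $|y|\geq\varrho/2$, which avoids your $y$-dependent projections and the separate large-$|y|$ discussion but lands on the same $|y|^\alpha$ control and the same final estimate.
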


\begin{proof} We prove (a) ---the same proof works for (b) replacing $\alpha$ by $\beta$.
As before, we may assume $C_0=1$.
Let $x_0 \in \Omega$, and define $\varrho := d(x_0)$.
Notice that we may assume $\varrho \in (0,\varrho_0)$ for some small $\varrho_0 > 0$ ---since $\overline{g} \in C^{\infty}(\Omega)$, the result is obvious if $d(x_0) \geq \varrho_0 > 0$.

Now, up to a positive multiplicative constant, we write
\[
\begin{aligned}
L \overline{g}(x_0) & = \frac{1}{2}\int_{B_{\varrho/2}} (\overline{g}(x_0 + y) + \overline{g}(x_0 - y) - 2\overline{g}(x_0)) K(y) dy \\
&\qquad + \frac{1}{2} \int_{\mathbb{R}^N \setminus B_{\varrho/2}} (\overline{g}(x_0 + y) + \overline{g}(x_0 - y) - 2\overline{g}(x_0)) K(y) dy =: \frac{1}{2} I_1 + \frac{1}{2} I_2.
\end{aligned}
\]
We notice that when $\alpha > s$, it is crucial to have also $\alpha < 2s$ so that the second integral above is finite.
Up to taking $\varrho_0 > 0$ smaller, the first integral can be estimated by using \eqref{eq:BoundSecondDerivativesExtension} as follows:
\[
|I_1| \leq C\int_{B_{\varrho/2}} \frac{|D^2 \overline{g}(x_0)||y|^2}{|y|^{N+2s}} dy \leq C \varrho^{\alpha - 2} \int_{B_{\varrho/2}}  |y|^{2-N-2s} dy = C \varrho^{\alpha - 2s}.
\]
To estimate the second integral, we pick a point $z_0 \in \partial\Omega$ such that $|x_0 - z_0| = \varrho$, and we consider
\[
\begin{aligned}
|I_2| & \leq C \int_{\mathbb{R}^N \setminus B_{\varrho/2}} \frac{|\overline{g}(x_0 + y) - \overline{g}(z_0)| + |\overline{g}(x_0 - y) - \overline{g}(z_0)| + 2|\overline{g}(x_0) - \overline{g}(z_0)|}{|y|^{N+2s}} dy \\
& \leq C \int_{\mathbb{R}^N \setminus B_{\varrho/2}} \frac{|x_0 - z_0 + y|^{\alpha} + |x_0 - z_0 - y|^{\alpha} + 2|x_0 - z_0|^{\alpha}}{|y|^{N+2s}} dy \\
& \leq C \int_{\mathbb{R}^N\setminus B_{\varrho/2}} \frac{(\varrho + |y|)^{\alpha} + \varrho^{\alpha}}{|y|^{N+2s}} dy + C \int_{\varrho/2}^{\infty} \frac{r^{\alpha}(\varrho/r + 1)^{\alpha} + \varrho^{\alpha}}{r^{1+2s}} dr \\
&\leq C \int_{\varrho/2}^{\infty} r^{\alpha - 1 - 2s} dr + C \varrho^{\alpha} \int_{\varrho/2}^{\infty} r^{-1-2s} dr = C \varrho^{\alpha - 2s}.
\end{aligned}
\]
In the second inequality we used that, since $\overline{g} \in C^{0,\alpha}(\overline{\Omega})$, if $x_0 \pm y \in \Omega$ then
\[
|\overline{g}(x_0 \pm y) - \overline{g}(z_0)| \leq C_0 |x_0 - z_0 \pm y|^{\alpha}
\]
while if $x_0 \pm y \in \mathbb{R}^N\setminus\Omega$, the same inequality holds by the assumption \eqref{eq:AssumptionsBoundaryData} on $g$.
The third one follows since $|x_0 - z_0 \pm y| \leq |x_0 - z_0| + |y| = \varrho + |y|$, while the last one since $\varrho/r \leq 2$.
Combining the estimates on $I_1$ and $I_2$, the lemma follows.
\end{proof}

We end the section with the following.
\begin{Lemma}
Let $s \in (0,1)$, $\alpha \in (0,s)$, $\nu \in \mathbb{S}^{N-1}$ and $L$ as in \eqref{eq:NonlocalHomogenousOperator}.
Then the function $\varphi_{\nu}^\alpha(x):= (x\cdot\nu)^{\alpha}_+$ satisfies
\[
-L \varphi_{\nu}^\alpha(x) > 0 \quad \text{ in } \; \{x\cdot\nu > 0\}.
\]
\end{Lemma}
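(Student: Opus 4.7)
The plan is to reduce, via translation and scaling symmetries, the positivity of $-L\varphi_\nu^\alpha$ on the half-space $\{x\cdot\nu > 0\}$ to the positivity of a single one-variable integral $I_\alpha$, and then to establish $I_\alpha > 0$ for $\alpha \in (0,s)$ by combining concavity in $\alpha$ with two explicit endpoint evaluations, at $\alpha = 0$ and at $\alpha = s$.

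Without loss of generality I would take $\nu = e_N$. Since $\varphi_\nu^\alpha$ depends on $x$ only through $x\cdot\nu$ and $L$ is translation invariant, $-L\varphi_\nu^\alpha(x)$ depends on $x$ only through $x\cdot\nu$. Combining this with the positive homogeneity of $\varphi_\nu^\alpha$ (of degree $\alpha$) and of $K$ (of degree $-N-2s$), and performing the change of variables $y \mapsto (x\cdot\nu)\, y$ in \eqref{eq:NonlocalNonHomogenousOperator}, one obtains
\[
-L\varphi_\nu^\alpha(x) = c \cdot (x\cdot\nu)^{\alpha - 2s}, \quad x\cdot\nu > 0, \qquad c := -L\varphi_\nu^\alpha(\nu).
\]
Next, writing $K(y) = |y|^{-N-2s} k(y/|y|)$ with $k \geq \lambda$ on $\mathbb{S}^{N-1}$, passing to polar coordinates $y = r\omega$, substituting $u = r|\omega\cdot\nu|$, and using that $G_\alpha(u) := 1 - \tfrac{1}{2}\bigl[(1+u)_+^\alpha + (1-u)_+^\alpha\bigr]$ is even in $u$, one obtains the factorization
\[
c = I_\alpha \cdot \int_{\mathbb{S}^{N-1}} |\omega\cdot\nu|^{2s}\, k(\omega)\, d\sigma(\omega), \qquad I_\alpha := \int_0^\infty G_\alpha(u)\, u^{-1-2s}\, du.
\]
The angular factor is strictly positive by ellipticity, so the problem reduces to showing $I_\alpha > 0$.

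For the positivity of $I_\alpha$, the key observation is that $\alpha \mapsto G_\alpha(u)$ is concave for each fixed $u \geq 0$, since
\[
\partial_\alpha^2 G_\alpha(u) = -\tfrac{1}{2}\bigl[(1+u)_+^\alpha \log^2(1+u) + (1-u)_+^\alpha \log^2(1-u)\bigr] \leq 0
\]
(with the convention $0 \cdot \log^2 0 = 0$). Integrating against the positive weight $u^{-1-2s}$, with differentiation under the integral sign justified by appropriate dominating functions, propagates this concavity to $\alpha \mapsto I_\alpha$ on $(0, 2s)$. The endpoints are then evaluated explicitly: first, $G_0(u) = \tfrac{1}{2}\chi_{\{u > 1\}}$, so $I_0 = \tfrac{1}{4s} > 0$; second, applying the factorization above to the fractional Laplacian (whose spherical density $k$ is constant) together with the classical identity $(-\Delta)^s (x\cdot\nu)_+^s \equiv 0$ in $\{x\cdot\nu > 0\}$ forces $I_s \cdot (\text{positive}) = 0$, i.e.\ $I_s = 0$. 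Concavity on $[0,s]$ then yields
\[
I_\alpha \geq (1 - \alpha/s)\, I_0 + (\alpha/s)\, I_s = \tfrac{1}{4s}(1 - \alpha/s) > 0 \quad \text{for all } \alpha \in [0, s),
\]
as desired.

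The main technical obstacle is the justification of differentiation under the integral sign for $I_\alpha''$: one must produce a function integrable against $u^{-1-2s}$ and dominating the integrand uniformly for $\alpha$ in a neighborhood of $[0, s]$. Near $u = 0$ the factor $(1\pm u)_+^\alpha \log^2(1\pm u)$ is $O(u^2)$ and integrates; near $u = 1$ the logarithmic singularity is integrable; and near $u = \infty$ the term grows like $u^\alpha \log^2 u$, which integrates against $u^{-1-2s}$ since $\alpha < 2s$. The essential conceptual input beyond this routine analysis is the classical identity $(-\Delta)^s (x\cdot\nu)_+^s = 0$ in the half-space, which pins down $I_s = 0$ and makes the concavity argument effective.
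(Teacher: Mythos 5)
Your proof is correct, and it does genuinely diverge from the paper's at the 1D stage. Both you and the paper perform the same dimensional reduction: the positivity of $-L\varphi_\nu^\alpha$ factorizes into a strictly positive angular integral $\int_{\mathbb{S}^{N-1}}|\omega\cdot\nu|^{2s}k(\omega)\,d\sigma$ times a purely one-dimensional quantity (the paper cites \cite[Lemmas 2.1, 2.3]{RosOtonSerra2016:art} for this; you derive it directly in polar coordinates), and both reduce by homogeneity to evaluating at a single point. The difference is how the 1D sign is established. The paper uses a touching/barrier argument: it takes $v_c(t)=(t+c)_+^s$, which is $s$-harmonic for $t>-c$, observes that for suitable $c$ the graph of $v_c$ touches that of $u(t)=(t_+)^\alpha$ from above at some $x_0>0$, and reads off the strict sign from the pointwise comparison inside the singular integral. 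You instead write the 1D quantity as an explicit integral $I_\alpha=\int_0^\infty G_\alpha(u)\,u^{-1-2s}\,du$, note $\alpha\mapsto G_\alpha(u)$ is concave for each fixed $u$ (hence so is $I_\alpha$ on $[0,2s)$, either by the direct fact that concavity passes through integration against a positive measure, or by your second-derivative computation with dominated convergence), and pin down the endpoint values $I_0=\tfrac{1}{4s}$ and $I_s=0$, the latter via $(-\Delta)^s(t_+)^s\equiv 0$. Concavity then gives the explicit lower bound $I_\alpha\geq \tfrac{1}{4s}(1-\alpha/s)>0$ for $\alpha\in(0,s)$. Both proofs ultimately rely on the same nontrivial input, the $s$-harmonicity of $(t_+)^s$, but your route replaces the geometric touching argument by a convexity-interpolation argument and has the extra benefit of producing a quantitative, uniform-in-$\alpha$ lower bound on the 1D constant, which the paper's argument does not directly give. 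One small remark: the domination near $u=0$ needs $|G_\alpha(u)|\leq Cu^2$ uniformly for $\alpha$ in a compact subset of $[0,2s)$, which indeed follows from the Taylor expansion $G_\alpha(u)=\tfrac{\alpha(1-\alpha)}{2}u^2+O(u^3)$ for $u<1$, so the continuity of $I_\alpha$ at $\alpha=0$ and the concavity on the closed interval $[0,s]$ are both justified.
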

\begin{proof}
First, note that $\varphi_{\nu}(x) = u(x\cdot \nu)$, where $u(t) := (t_+)^{\alpha}$.
Consequently (cfr. with \cite[Lemma 2.1 and Lemma 2.3]{RosOtonSerra2016:art}), it follows that
\[
-L\varphi_{\nu}(x) = \left(c_s \int_{\mathbb{S}^{N-1}}|\vartheta_n|^{2s} K(\theta)d\theta \right) (-\Delta)^s_{\mathbb{R}}u(x\cdot\nu) \quad \text{ in } \{x\cdot\nu > 0\},
\]
where $c_s > 0$ is a suitable constant.
Therefore, it suffices to prove the result for the fractional Laplacian $(-\Delta)^s_{\mathbb{R}}$ in dimension $N=1$.

For this, notice that $u$ is homogeneous of degree $\alpha$, and hence $Lu$ is homogeneous of degree $\alpha - 2s$.
Thus, it is enough to show that
\[
(-\Delta)^s_{\mathbb{R}} u (x_0) > 0 \quad \text{for some } x_0>0.
\]
We consider the function $v_c(x) := (x+c)^s_+$, which satisfies $(-\Delta)^s_{\mathbb{R}} v_c(x) = 0$ for all $x > -c$, where $c > 0$ is a free parameter (cfr. with \cite[Lemma 2.2]{RosOtonSerra2016:art}).
Since $v_c \geq u$ in $\mathbb R$ for $c$ large enough, it easy to see that there is $x_0>0$ and $c > 0$ such that
\[
v_c(x_0) = u(x_0) \qquad \textrm{and} \qquad v_c \geq u \quad \text{in } \mathbb{R}.
\]
Consequently, it follows
\[
v_c(x_0) - \frac{v_c(x_0 + y) + v_c(x_0 - y)}{2} \leq u_c(x_0) - \frac{u_c(x_0 + y) + u_c(x_0 - y)}{2},  \quad y \in \mathbb{R}
\]
(with strict inequality in $\mathbb R_+\setminus\{x_0\}$),
and so $0 = (-\Delta)^s_{\mathbb{R}}v_c(x_0) < (-\Delta)^s_{\mathbb{R}}u(x_0)$.
\end{proof}
%

%
%
%
%
%
%
%
%
%
%
%
\section{Proof of the main results}\label{Section:C1Domains}
The main goal of this section is to prove Theorem \ref{Theorem:MainTheorem}.
For this, we will use the following.

\begin{Lemma}[\cite{Lieberman1985:art}]\label{lemma-Lieberman}
Let $\Omega\subset \mathbb R^N$ be any bounded $C^1$ domain.
Then, there exists a modulus of continuity $\omega$ and a function $\psi\in C^1(\overline\Omega)$ satisfying
\begin{equation}\label{eq:PropPsi}
\begin{aligned}
&C^{-1} d \leq \psi \leq C d \quad \text{ in } \Omega, \\
&|\nabla \psi(x) - \nabla \psi(y)| \leq \omega(|x-y|) \quad \text{ for all } x,y \in \Omega, \\
&|D^2\psi(x)| \leq \omega(d(x)) \,d^{-1}(x) \quad \text{ for all } x \in \Omega,
\end{aligned}
\end{equation}
where $d(x)={\rm dist}(x,\Omega^c)$ and $C > 0$ is a constant depending only on $\Omega$.
\end{Lemma}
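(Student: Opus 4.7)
My plan is to construct $\psi$ as a mollification of the distance function $d(x)=\operatorname{dist}(x,\Omega^c)$ at a scale proportional to $d$ itself, in the spirit of Lieberman's regularized distance. First, I would use the $C^1$ regularity of $\partial\Omega$ to extract a modulus of continuity $\omega_0$ for the outward unit normal; standard results on the distance function then give that $d$ is $C^1$ in some tubular neighborhood $\Omega_{r_0}:=\{0<d<r_0\}$ of the boundary, with $|\nabla d(x)-\nabla d(y)|\leq \omega_0(|x-y|)$ for all $x,y\in\Omega_{r_0}$. Away from this neighborhood $d\geq r_0$, so any reasonable construction is smooth there by interior considerations.

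Next, I would pick a radial $\eta\in C_c^\infty(B_1)$ with $\eta\geq0$ and $\int\eta=1$, fix a small $\tau>0$, extend $d$ by $0$ outside $\Omega$ (making it $1$-Lipschitz on $\mathbb{R}^N$), and set
\[
\psi(x):=\int_{\mathbb{R}^N} d(x-\tau d(x) y)\,\eta(y)\,dy,\qquad x\in\Omega.
\]
The two-sided bound $C^{-1}d\leq\psi\leq Cd$ is then immediate, since $d$ being $1$-Lipschitz gives $|\psi(x)-d(x)|\leq C\tau d(x)$. Changing variables to $z=x-\tau d(x)y$ recasts $\psi(x)$ as an average of $d$ over a ball of radius $\tau d(x)$ centered at $x$, so smoothness of $\psi$ in the interior of $\Omega$ follows from standard convolution arguments.

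For the derivative bounds I would differentiate $\psi$ under the integral. Each derivative landing on the rescaled kernel $\eta((x-z)/(\tau d(x)))$ costs a factor $[\tau d(x)]^{-1}$, but a single such factor can be traded, via integration by parts, for a derivative of $d(z)$, whose averaged oscillation at scale $\tau d(x)$ inside $\Omega_{r_0}$ is controlled by $\omega_0(\tau d(x))$. This yields a modulus $|\nabla\psi(x)-\nabla\psi(x')|\leq\omega(|x-x'|)$ with $\omega$ depending on $\omega_0$, $\tau$, and $\Omega$, and the second-derivative bound
\[
|D^2\psi(x)|\leq C\,\frac{\omega_0(\tau d(x))}{d(x)}\leq \frac{\omega(d(x))}{d(x)}.
\]

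The main obstacle is the bookkeeping in these derivative estimates: because the mollification scale $\tau d(x)$ depends on $x$ through the only-Lipschitz function $d$, differentiating twice produces many cross-terms that must all be controlled uniformly while retaining the correct $\omega$-modulus. A cleaner alternative, which I would fall back on if the direct computation becomes opaque, is a Whitney-decomposition construction: cover $\Omega$ by dyadic cubes $\{Q_j\}$ of side-length comparable to $d(Q_j,\partial\Omega)$, pick an affine approximation $\ell_j$ of $d$ on each $Q_j$, and set $\psi:=\sum_j\ell_j\varphi_j$ for a smooth partition of unity $\{\varphi_j\}$ subordinate to $\{Q_j\}$; the three properties in \eqref{eq:PropPsi} then follow cube by cube from the $C^1$ regularity of $\partial\Omega$ and the scaling $|D^k\varphi_j|\lesssim d^{-k}$.
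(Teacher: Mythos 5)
The paper itself gives no proof of this lemma; it simply cites Lieberman (Lemma~1.1 and Theorem~2.1 of that reference). So your proposal is a reconstruction, and it is in the right spirit --- mollifying the distance function at a scale proportional to the distance \emph{is} the core idea behind Lieberman's regularized distance. But there are two real gaps in your justification.

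First, the claim that \emph{``standard results on the distance function give that $d$ is $C^1$ in a tubular neighborhood''} is false for $C^1$ domains (and even for $C^{1,\alpha}$ domains with $\alpha<1$). The distance function is $C^1$ near the boundary precisely when the nearest-point projection is single-valued there, which requires positive reach; for boundaries that are merely $C^1$, the cut locus of $d$ can accumulate at $\partial\Omega$. A concrete example: if $\partial\Omega$ is locally the graph of $f(x')=|x'|^{1+\alpha}/(1+\alpha)$ with $0<\alpha<1$ and $\Omega$ lies above it, then the points $(0,\delta)$ with $\delta$ small are equidistant from two boundary points at $|x'|\approx\delta^{1/(1-\alpha)}$, so $d$ is not differentiable there, no matter how small $\delta$ is. This lack of regularity of $d$ is exactly \emph{why} a regularized distance is needed at all, so you cannot take a modulus of continuity $\omega_0$ for $\nabla d$ as given.

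Second, your explicit formula $\psi(x)=\int d(x-\tau d(x)y)\,\eta(y)\,dy$ has the mollification radius $\tau d(x)$, which is itself only Lipschitz; differentiating under the integral produces $\nabla d(x)$, which is merely $L^\infty$, and it is not clear from this formula that $\psi$ is even $C^1$, let alone $C^\infty$. You flag this as ``bookkeeping,'' but it is a structural obstruction, not a technicality. Lieberman's construction sidesteps it by defining the regularized distance \emph{implicitly}: one fixes a smooth mollifier $\eta$ and defines $\psi(x)$ as the unique solution $\rho$ of
\begin{equation*}
\rho=\int_{B_1} d\bigl(x-\tau\rho\,y\bigr)\,\eta(y)\,dy,
\end{equation*}
which exists and is unique since the right-hand side is a contraction in $\rho$ (because $d$ is $1$-Lipschitz and $\tau$ is small). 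After a change of variables the right-hand side is a $C^\infty$ function of $(x,\rho)$ for $\rho>0$, even though $d$ is only Lipschitz, so the implicit function theorem yields $\psi\in C^\infty(\Omega)$ with the scaling bounds $|D^k\psi|\lesssim d^{1-k}$. The $C^1$ bounds in \eqref{eq:PropPsi} are then obtained not from smoothness of $d$, but by expressing $\partial\Omega$ locally as a $C^1$ graph and transferring the modulus of continuity of the graph's gradient to $\nabla\psi$.

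Your fallback, the Whitney-decomposition construction, is a sound alternative and does not suffer from the first problem as stated (it never needs $\nabla d$), but you would still need to justify that the affine approximants $\ell_j$ on neighboring cubes differ by at most $\omega(\ell)\ell$, and for that you again need the local graph representation of $\partial\Omega$ rather than any regularity of $d$ itself. As written, though, the proposal as a whole rests on an incorrect premise, so it does not yet constitute a proof.
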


In the case of $C^{1,\gamma}$ domains $\Omega$, it is easy to see that one can choose $\omega(r) = Cr^{\gamma}$ in \eqref{eq:PropPsi}; see \cite[Definition 2.1]{RosOtonSerra2017:art}.
For a proof in case of general $C^1$ domains, we refer to \cite[Lemma 1.1 and Theorem 2.1]{Lieberman1985:art}.

We next prove two technical lemmas ---in case that $\Omega$ is $C^{1,\gamma}$, they correspond to Lemmas 2.4 and 2.5 of \cite{RosOtonSerra2017:art}.

\begin{Lemma}\label{PreliminaryLemma1}
Let $\Omega$ be any $C^1$ domain and let $\psi$ and $\omega$ be defined as in \eqref{eq:PropPsi}. Then, for each $x_0 \in \Omega$, it holds
\[
\left|\psi(x_0+y) - \big( \psi(x_0) + \nabla \psi(x_0) \cdot y \big)_+ \right| \leq C \omega(|y|)\,|y|, \quad y \in \mathbb{R}^N,
\]
where $C > 0$ depends only on $\Omega$.
\end{Lemma}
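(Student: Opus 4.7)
The plan is to reduce everything to a Taylor expansion of $\psi$ along segments that lie in $\overline\Omega$, using the modulus of continuity of $\nabla\psi$ from \eqref{eq:PropPsi}, the vanishing $\psi=0$ on $\partial\Omega$ (which follows from $\psi\le Cd$), and the sign constraint $\psi\ge 0$ in $\overline\Omega$ (from $\psi\ge C^{-1}d$). After replacing $\omega$ by its non-decreasing envelope, one may assume $\omega$ is non-decreasing; and the bound is trivial for $|y|\ge \mathrm{diam}(\Omega)$, since both $\psi(x_0+y)$ and $\ell(y):=\psi(x_0)+\nabla\psi(x_0)\cdot y$ are then $O(|y|)$ while $\omega$ may be assumed bounded below on $[\mathrm{diam}(\Omega),\infty)$. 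So I may restrict attention to $|y|$ small.

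In the first case, where the segment $[x_0,x_0+y]\subset\overline\Omega$, the fundamental theorem of calculus gives
\[
\psi(x_0+y)=\ell(y)+R,\qquad R:=\int_0^1\bigl(\nabla\psi(x_0+ty)-\nabla\psi(x_0)\bigr)\cdot y\,dt,
\]
with $|R|\le \omega(|y|)|y|$. I would then split according to the sign of $\ell(y)$: if $\ell(y)\ge 0$ then $\ell(y)_+=\ell(y)$ and the bound is immediate; if $\ell(y)<0$ then $\ell(y)_+=0$ while $\psi(x_0+y)\ge 0$, which forces $0\le \psi(x_0+y)=\ell(y)+R\le R$, giving again $|\psi(x_0+y)-\ell(y)_+|\le |R|$.

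The second case is when the segment exits $\overline\Omega$. Set $t_1:=\inf\{t\in[0,1]:x_0+ty\notin\overline\Omega\}\in(0,1]$; by continuity $x_0+t_1y\in\partial\Omega$, so $\psi(x_0+t_1y)=0$, and $[x_0,x_0+t_1y]\subset\overline\Omega$. The Taylor identity of the first case applied on this shorter segment yields $\psi(x_0)+t_1\nabla\psi(x_0)\cdot y=-R_1$ with $|R_1|/t_1\le \omega(|y|)|y|$. Eliminating $\nabla\psi(x_0)\cdot y$ gives
\[
\ell(y)=\frac{t_1-1}{t_1}\,\psi(x_0)-\frac{R_1}{t_1},
\]
whose first term is $\le 0$ since $t_1\le 1$ and $\psi(x_0)\ge 0$; hence $\ell(y)_+\le \omega(|y|)|y|$. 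If $x_0+y\notin\overline\Omega$ then $\psi(x_0+y)=0$ and we are done; if $x_0+y\in\partial\Omega$ the same conclusion holds. Otherwise the segment re-enters $\Omega$, and I would set $t_2:=\sup\{t\in[0,1]:x_0+ty\notin\overline\Omega\}\in[t_1,1)$, so $x_0+t_2y\in\partial\Omega$ and $[x_0+t_2y,x_0+y]\subset\overline\Omega$. A second Taylor expansion on this last sub-segment, combined with $|\nabla\psi(x_0+t_2y)-\nabla\psi(x_0)|\le \omega(|y|)$, gives $\psi(x_0+y)=(1-t_2)\nabla\psi(x_0)\cdot y + O(\omega(|y|)|y|)$. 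Substituting the earlier formula for $\nabla\psi(x_0)\cdot y$ yields $\psi(x_0+y)= -\tfrac{(1-t_2)\psi(x_0)}{t_1}+O(\omega(|y|)|y|)$, and since the first term is $\le 0$ while the left-hand side is $\ge 0$, we conclude $\psi(x_0+y)\le C\omega(|y|)|y|$; together with $\ell(y)_+\le\omega(|y|)|y|$ this finishes the bound.

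The main obstacle will be the re-entry sub-case: two Taylor expansions based at different boundary points must be combined algebraically, and the sign constraints $\psi\ge 0$ together with $t_1\le t_2\le 1$ have to be exploited simultaneously to control $\psi(x_0+y)$ and $\ell(y)_+$ individually, since neither is obviously small from a single one-sided estimate. A technically shorter route would be to invoke the full form of Lieberman's construction in \cite{Lieberman1985:art}, producing $\psi$ as a regularised signed distance that extends $C^1$ to all of $\mathbb{R}^N$ with $\psi<0$ outside $\Omega$; then $\psi$ (extended by zero) equals $\Psi_+$ for a globally $C^1$ function $\Psi$ with the same gradient modulus $\omega$, and the estimate follows directly from Taylor for $\Psi$ combined with $|a_+-b_+|\le|a-b|$.
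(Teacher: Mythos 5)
Your proof is correct, and the main argument takes a genuinely different route from the paper's. The paper simply invokes (implicitly, via Lieberman's construction) a $C^1$ extension $\widetilde{\psi}$ of $\psi$ to all of $\mathbb{R}^N$ with $\widetilde{\psi}\le 0$ outside $\Omega$ and the same gradient modulus $\omega$, applies the mean value theorem globally to $\widetilde{\psi}$, and concludes with $\psi=(\widetilde{\psi})_+$ and $|a_+-b_+|\le|a-b|$ --- exactly the ``technically shorter route'' you describe at the end. Your main argument instead avoids the extension altogether: it works only inside $\overline\Omega$, splits according to whether the segment $[x_0,x_0+y]$ stays in $\overline\Omega$, exits once, or exits and re-enters, uses the fundamental theorem of calculus on the interior sub-segments $[0,t_1]$ and $[t_2,1]$ (which are legitimately inside $\overline\Omega$), and then eliminates $\nabla\psi(x_0)\cdot y$ algebraically while exploiting $\psi\ge 0$, $\psi|_{\partial\Omega}=0$, and $0<t_1\le t_2<1$ to control $\psi(x_0+y)$ and $\ell(y)_+$ separately. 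I verified the key computations: $\ell(y)=\tfrac{t_1-1}{t_1}\psi(x_0)-\tfrac{R_1}{t_1}$ with $|R_1|/t_1\le\omega(|y|)|y|$ gives $\ell(y)_+\le\omega(|y|)|y|$, and in the re-entry case $\psi(x_0+y)=-\tfrac{(1-t_2)}{t_1}\psi(x_0)-\tfrac{(1-t_2)}{t_1}R_1+O(\omega(|y|)|y|)$ is sandwiched between $0$ and $C\omega(|y|)|y|$ since the first term is nonpositive. What your approach buys is independence from the extension lemma (whose existence, with both the sign condition outside $\Omega$ \emph{and} preservation of the modulus, is not entirely trivial to establish for a general $C^1$ domain); the cost is a more involved case analysis. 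The large-$|y|$ reduction (normalising $\omega$ to be non-decreasing and bounded below on $[\mathrm{diam}\,\Omega,\infty)$) is a standard harmless adjustment that both proofs implicitly rely on.
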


\begin{proof}
Since $\psi \in C^1(\overline{\Omega})$, there is an extension $\widetilde{\psi} \in C^1(\mathbb{R}^N)$ with $\widetilde{\psi} \leq 0$ in $\mathbb{R}^N \setminus \Omega$ and $\widetilde{\psi}|_{\Omega} = \psi$, preserving also the modulus of continuity $\omega$ of $\psi$ (up to a multiplicative constant).
Thus, if $x_0 \in \Omega$ we have
\[
\begin{aligned}
\bigg| \widetilde{\psi}(x) - \widetilde{\psi}(x_0) - \nabla \widetilde{\psi}(x_0) \cdot & (x-x_0)\bigg| = \left| \left( \nabla \widetilde{\psi}(\lambda x + (1-\lambda)x_0) - \nabla\widetilde{\psi}(x_0) \right)\cdot(x - x_0)  \right| \\
& \leq \left| \nabla \widetilde{\psi}(\lambda x + (1-\lambda)x_0) - \nabla\widetilde{\psi}(x_0) \right| \left| x - x_0 \right| \\
& \leq C \omega(\lambda|x-x_0|)|x-x_0| \leq C\omega(|x-x_0|)|x-x_0|,
\end{aligned}
\]
for all $x \in \mathbb{R}^N$, since $\lambda \in (0,1)$ and $\omega$ is increasing.
Now, using that $\widetilde{\psi}(x_0) = \psi(x_0)$, $\nabla \widetilde{\psi}(x_0) = \nabla \psi(x_0)$, $(\widetilde{\psi})_+ = \psi$, and that $|a_+-b_+| \leq |a-b|$, we obtain
\[
\left| \psi(x) - \big( \psi(x_0) + \nabla \psi(x_0) \cdot (x-x_0) \big)_+\right| \leq \omega(|x-x_0|)|x-x_0|,
\]
for all $x \in \mathbb{R}^N$, and the thesis follows.
\end{proof}
The next lemma is similar to \cite[Lemma 2.5]{RosOtonSerra2017:art}.
\begin{Lemma}\label{PreliminaryLemma2}
Let $\Omega$ be any $C^1$ domain, $x_0 \in \Omega$, $\varrho = d(x_0)/2$ and let $\omega$ be a modulus of continuity.
Then, there exists a modulus of continuity $\widetilde{\omega}$ such that, if $\delta > -1$ and $\beta \not= \delta$, then
\[
\int_{B_1 \setminus B_{\varrho/2}} d^{\delta}(x_0 + y) \frac{\omega(|y|)dy}{|y|^{N+\beta}} \leq C \left(1 + \widetilde{\omega}(\varrho)\varrho^{\delta - \beta}\right),
\]
for some constant $C > 0$ depending only on $\delta$, $\beta$, $\Omega$ and $\omega$.
\end{Lemma}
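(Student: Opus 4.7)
The plan is to partition the integration region $B_1\setminus B_{\varrho/2}$ into dyadic annuli around the origin, so that on each piece the weight $\omega(|y|)/|y|^{N+\beta}$ is essentially constant, and then separately bound the volume integral of $d^\delta(x_0+\cdot)$ over each annulus using the Lipschitz geometry of $\partial\Omega$. Summing these contributions gives a geometric-type series whose dominant term depends on the sign of $\delta-\beta$.

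\textbf{Key steps.} First I would set, for integers $k\ge 0$ with $2^{k-1}\varrho\le 1$, the annulus $A_k:=\{y\in\mathbb{R}^N: 2^{k-1}\varrho\le |y|\le 2^k\varrho\}\cap B_1$, so that $B_1\setminus B_{\varrho/2}=\bigcup_k A_k$. Since $\omega$ is nondecreasing,
\[
\frac{\omega(|y|)}{|y|^{N+\beta}}\le C\,\omega(2^k\varrho)(2^k\varrho)^{-N-\beta}\qquad\text{on }A_k.
\]
Next, I would establish the volume bound
\[
\int_{B_r(x_0)}d^\delta(y)\,dy\le C\bigl(r^{N+\delta}+d(x_0)^\delta r^N\chi_{\{\delta>0,\ r\le d(x_0)\}}\bigr),
\]
valid for $\delta>-1$ and Lipschitz $\Omega$. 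For $r\ge d(x_0)$ this follows from the layer-cake formula together with the measure-density estimate $|\{d<t\}\cap B_r(x_0)|\le Cr^{N-1}t$ for $t\le Cr$; for $r\le d(x_0)$ the Lipschitz property of $d$ gives $d\sim d(x_0)$ on $B_r(x_0)$, handling the remaining case. Combining the two bounds on the annulus $A_k$ yields
\[
\int_{A_k} d^\delta(x_0+y)\frac{\omega(|y|)}{|y|^{N+\beta}}\,dy\le C\,\omega(2^k\varrho)(2^k\varrho)^{\delta-\beta}+\text{(harmless term)},
\]
where the second piece contributes only over the $O(1)$ indices with $2^k\varrho\lesssim d(x_0)\sim\varrho$, and is estimated directly by $C\,\omega(\varrho)\varrho^{\delta-\beta}$.

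\textbf{Summation and the modulus $\widetilde\omega$.} Setting $r_k:=2^k\varrho$, it remains to bound $S:=\sum_{k:\,r_k\le 2}\omega(r_k)r_k^{\delta-\beta}$. I would split into two regimes. If $\delta>\beta$, the factor $r_k^{\delta-\beta}$ is largest for $r_k\sim 1$, so $S\le C\omega(1)$, i.e., $S\le C$. If $\delta<\beta$, then $r_k^{\delta-\beta}$ is largest for $r_k\sim\varrho$; comparing the sum to a geometric series with ratio $2^{\delta-\beta}<1$ produces
\[
S\le C\,\widetilde\omega(\varrho)\,\varrho^{\delta-\beta},
\]
where $\widetilde\omega$ is constructed from $\omega$ by $\widetilde\omega(\varrho):=\sup_{r\le 2}\omega(r)\,(r/\varrho)^{\delta-\beta}/\varrho^{\delta-\beta}$ modulated appropriately (or, more transparently, any modulus dominating $\omega$ on $[\varrho,1]$ with the correct decay). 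In either sign regime the bound $C(1+\widetilde\omega(\varrho)\varrho^{\delta-\beta})$ is achieved, and the excluded case $\delta=\beta$ is precisely where the sum would pick up a logarithmic divergence.

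\textbf{Main obstacle.} The delicate point is the construction of the modulus $\widetilde\omega$: one must argue that the dyadic sum against $\omega$ inherits a vanishing-at-zero factor rather than producing a universal constant times $\varrho^{\delta-\beta}$, which would be strictly worse than claimed. This requires using $\omega(r)\to 0$ as $r\to 0$ carefully, e.g., by splitting the sum at a scale $\rho(\varrho)\to 0$ chosen so that the tail sum is controlled by $\omega(\rho(\varrho))$ while the finite initial sum is controlled by the geometric factor. The condition $\delta>-1$ is used only in the volume estimate to guarantee finiteness, while the strict inequality $\beta\ne\delta$ is what allows us to sum the geometric series in closed form.
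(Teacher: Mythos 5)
Your argument follows the paper's structure in its essentials: both decompose $B_1\setminus B_{\varrho/2}$ into dyadic annuli, both estimate the per-annulus integral of $d^{\delta}$ via a measure-density bound of the form $|\{d<t\}\cap B_r|\lesssim r^{N-1}t$ (the paper derives this with the coarea formula, using that the level sets of $d$ are $C^1$; your layer-cake version is equivalent and in fact only needs Lipschitz regularity of $\partial\Omega$), and both reduce the problem to bounding the dyadic sum $\sum_{k}\omega(r_k)r_k^{\delta-\beta}$ with $r_k\sim 2^k\varrho$. The genuine point of divergence is the final step of extracting the modulus $\widetilde\omega$. The paper applies the Stolz--Ces\`aro theorem to show $\sum_{k\le M}\omega(2^{-k})2^{(\beta-\delta)k}=o\bigl(2^{(\beta-\delta)M}\bigr)$ and then defines $\widetilde\omega$ abstractly from this limit; you instead propose an explicit split at an intermediate scale $\rho(\varrho)\to 0$. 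Your scheme does work: taking, say, $\rho(\varrho)=\sqrt{\varrho}$, the part with $r_k\le\sqrt{\varrho}$ is controlled by $\omega(\sqrt{\varrho})\sum r_k^{\delta-\beta}\lesssim\omega(\sqrt{\varrho})\varrho^{\delta-\beta}$, while the part with $r_k>\sqrt{\varrho}$ sums to $\lesssim\varrho^{(\delta-\beta)/2}=\varrho^{(\beta-\delta)/2}\varrho^{\delta-\beta}$, and both prefactors vanish as $\varrho\to0$, so one may take $\widetilde\omega(\varrho)=C\bigl(\omega(\sqrt{\varrho})+\varrho^{(\beta-\delta)/2}\bigr)$. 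This is arguably more transparent and more constructive than the Stolz--Ces\`aro route, which only gives existence of $\widetilde\omega$. One caveat: the closed-form supremum you write down for $\widetilde\omega$ in the penultimate paragraph is not well-posed as stated (for $\delta<\beta$ the supremand blows up as $r\to0^+$), but you yourself flag this as informal and the scale-splitting remedy you outline in the last paragraph is the correct fix.
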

\begin{proof} Let us take $x_0 = 0$ (this can always be done up to a translation of the coordinate system), define $\varrho = d(0)/2$ and take $\kappa_{\ast} > 0$ such that the level sets $\{d = t\}$ are $C^1$ for all $t \in (0,\kappa_{\ast}]$ (this $\kappa_{\ast}$ exists since $\Omega \in C^1$). Without loss of generality, we can assume $\kappa_{\ast} > 2\varrho$ (i.e. $\varrho > 0$ small). Notice that if $\varrho \geq \varrho_0 > 0$ the inequality in our statement is just
\[
\int_{B_1 \setminus B_{\varrho/2}} d^{\delta}(x_0 + y) \frac{\omega(|y|)dy}{|y|^{N+\beta}} \leq C,
\]
and it is immediately verified. So, from now on, we will assume $0 < \varrho < \varrho_0$, for some small $\varrho_0$. First of all, we have
\begin{equation}\label{eq:IntEstimatedlarge}
\int_{(B_1 \setminus B_{\varrho/2})\cap \{d \geq \kappa_{\ast}\} } d^{\delta}(y) \frac{\omega(|y|)dy}{|y|^{N+\beta}} \leq C,
\end{equation}
where $C > 0$ depends only on $\delta$, $\beta$, $\Omega$ and $\omega$, thanks to the choice $\kappa_{\ast} > 2\varrho$.

Now we fix $M > M_0$ such that $2^{-M} \leq \varrho \leq 2^{-M+1}$ ($M_0$ is large and depends on $\varrho_0 > 0$) and, using the coarea formula, we obtain
\[
\begin{aligned}
\int_{(B_1 \setminus B_{\varrho/2})\cap \{d < \kappa_{\ast}\} } & d^{\delta}(y) \frac{\omega(|y|)}{|y|^{N+\beta}}dy \leq \sum_{k=0}^M \int_{(B_{2^{-k}} \setminus B_{2^{-k-1}})\cap \{d < \kappa_{\ast}\} } d^{\delta}(y) \frac{\omega(|y|)}{|y|^{N+\beta}}dy \\
& \leq \sum_{k=0}^M \frac{\omega(2^{-k})}{2^{-(N+\beta)k}} \int_{(B_{2^{-k}} \setminus B_{2^{-k-1}})\cap \{d < c2^{-k}\} } d^{\delta}(y) |\nabla d(y)| dy \\
& = \sum_{k=0}^M \frac{\omega(2^{-k})}{2^{-(N+\beta)k}} \int_0^{c2^{-k}} t^{\delta} dt \int_{(B_{2^{-k}} \setminus B_{2^{-k-1}})\cap \{d = t\} } d\mathcal{H}^{N-1}(y),
\end{aligned}
\]
for some $c > 0$ depending only on $\Omega$. Now, since $\Omega \in C^1$, we have for each $t \in (0,\kappa_{\ast})$
\[
\mathcal{H}^{N-1} \big( (B_{2^{-k}} \setminus B_{2^{-k-1}})\cap \{d = t\} \big) \leq C 2^{-(N-1)k},
\]
for some constant $C > 0$ depending only on $\Omega$. Consequently, it follows that
\[
\int_{(B_1 \setminus B_{\varrho/2})\cap \{d < \kappa_{\ast}\} } d^{\delta}(y) \frac{\omega(|y|)dy}{|y|^{N+\beta}} \leq C \sum_{k=0}^M \omega(2^{-k}) 2^{(\beta-\delta)k}.
\]
Thus if $\beta < \delta$, it is immediate to see that the above sum is bounded independently of $M$ (i.e. $\varrho$).
Keeping in mind that $2^{-M} \leq \varrho \leq 2^{-M+1}$, when $\beta > \delta$, it is enough to prove the existence of a modulus of continuity $\widetilde{\omega}$ such that
\begin{equation}\label{eq:BoundSumModCont}
\sum_{k=0}^M \omega(2^{-k}) 2^{(\beta-\delta)k} \leq  \widetilde{w}(2^{-M})\,2^{(\beta - \delta)M}.
\end{equation}
Finally, thanks to the Stolz-Cesaro theorem (l'Hopital rule for sequences), we obtain
\[
\begin{aligned}
\lim_{M\to\infty} \frac{\sum_{k=0}^M \omega(2^{-k}) 2^{(\beta-\delta)k}}{2^{(\beta-\delta)M}} &= \lim_{M\to\infty} \frac{\sum_{k=0}^{M+1} \omega(2^{-k}) 2^{(\beta-\delta)k} - \sum_{k=0}^M \omega(2^{-k}) 2^{(\beta-\delta)k}}{2^{(\beta-\delta)(M+1)} - 2^{(\beta-\delta)M}} \\
& = \lim_{M\to\infty} \frac{\omega(2^{-M+1}) 2^{(\beta-\delta)(M+1)} - \omega(2^{-M+1}) 2^{(\beta-\delta)M}}{2^{(\beta-\delta)M}} = 0,
\end{aligned}
\]
and recalling that $M > M_0$ for some large $M_0$, \eqref{eq:BoundSumModCont} follows, for some modulus of continuity~$\widetilde{\omega}$. Combining \eqref{eq:IntEstimatedlarge} and \eqref{eq:BoundSumModCont} we complete the proof of our statement.
\end{proof}

We next prove that the function $\psi^\alpha$ is a supersolution near $\partial\Omega$.

\begin{Lemma}\label{Lemma:SupersolutionPsia}
Let $s \in (0,1)$, $\alpha \in (0,s)$ and $L$ as in \eqref{eq:NonlocalNonHomogenousOperator}-\eqref{eq:KernelProp}-\eqref{eq:NonlocalHomogenousOperator}.
Then for any $C^1$ domain $\Omega$ and for any function $\psi$ satisfying \eqref{eq:PropPsi}, there is $\varrho_0 > 0$ such that:
\[
-L (\psi^{\alpha}) \geq c_0 d^{\alpha - 2s}>0 \quad \text{ in } \{0 < d < \varrho_0\},
\]
for some constant $c_0 > 0$ depending only on $N$, $s$, $\alpha$, $\Omega$, $\omega$ and the ellipticity constants.
\end{Lemma}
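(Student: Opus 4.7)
The plan is to compare $\psi^\alpha$ near a fixed point $x_0 \in \Omega$ with the linearized ``flat'' model
\[
F(x) := \bigl(\psi(x_0) + \nabla\psi(x_0)\cdot (x-x_0)\bigr)_+.
\]
The leading term $-LF^\alpha(x_0)$ will yield the desired positive lower bound of order $d(x_0)^{\alpha-2s}$ via the previous lemma on $\varphi_\nu^\alpha$, while the remainder $-L(\psi^\alpha - F^\alpha)(x_0)$ will be shown to be of smaller order as $d(x_0)\to 0$, thanks to the modulus $\omega$.

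More precisely, fix $x_0 \in \Omega$ with $\varrho := d(x_0)$ small, set $a := |\nabla\psi(x_0)|$ and $\nu := \nabla\psi(x_0)/a$, and observe that $F$ is an affine, isotropic transformation of $\varphi_\nu$, so $F^\alpha(x) = a^\alpha \varphi_\nu^\alpha(x - t_0\nu)$ for the appropriate shift $t_0\in\mathbb R$. By translation invariance of $L$ together with the homogeneity of $K$, the preceding lemma implies that $-L\varphi_\nu^\alpha(x) = C(\nu)(x\cdot\nu)^{\alpha-2s}$ in $\{x\cdot\nu>0\}$, where $C(\nu)$ is continuous and strictly positive on $\mathbb S^{N-1}$, hence bounded below by a uniform $c_\ast>0$. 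A direct computation then gives
\[
-LF^\alpha(x_0) \,=\, a^{2s}\,C(\nu)\,\psi(x_0)^{\alpha-2s} \,\geq\, c_1 \varrho^{\alpha-2s},
\]
with $c_1>0$ uniform, using that $|\nabla\psi|$ is bounded away from zero in a neighborhood of $\partial\Omega$ (otherwise the lower bound $\psi \geq C^{-1}d$ would fail) and that $\psi(x_0)\sim \varrho$.

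To control the remainder $g := \psi^\alpha - F^\alpha$, I use three facts: (i) $g(x_0)=0$ and $\nabla g(x_0)=0$, since $\psi(x_0)=F(x_0)$ and $\nabla\psi(x_0)=\nabla F(x_0)$; (ii) by Lemma \ref{PreliminaryLemma1} together with the subadditivity of $t\mapsto t^\alpha$ on $\mathbb R_+$, we have $|g(x_0+y)| \leq C\omega(|y|)^\alpha |y|^\alpha$ on bounded sets; (iii) globally $|g(x_0+y)| \leq C(1+|y|^\alpha)$. I split $Lg(x_0)$ into three regions. On $B_{\varrho/2}$, I compute $D^2(\psi^\alpha - F^\alpha)$ and observe that the singular contribution $\alpha(\alpha-1)\psi^{\alpha-2}\nabla\psi\otimes\nabla\psi$ is matched exactly by its $F$-counterpart at $x_0$, and that $D^2 F \equiv 0$; combining this with the bound $|D^2\psi|\leq \omega(\varrho)/\varrho$ and the continuity of $\nabla\psi$ with modulus $\omega$ yields $|D^2 g|\leq C\omega(\varrho)\varrho^{\alpha-2}$ throughout $B_{\varrho/2}(x_0)$, so by Taylor expansion the inner contribution is of order $\omega(\varrho)\varrho^{\alpha-2s}$. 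On $B_1 \setminus B_{\varrho/2}$, I apply Lemma \ref{PreliminaryLemma2} with $\delta=0$, $\beta=2s-\alpha$, and the modulus $\omega^\alpha$ (which is again a modulus of continuity), obtaining a contribution bounded by $C(1 + \widetilde\omega(\varrho)\varrho^{\alpha-2s})$. On $\mathbb R^N \setminus B_1$, the rough bound integrated against $K(y)\lesssim |y|^{-N-2s}$ is finite because $\alpha<2s$, producing a uniformly bounded contribution.

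Summing the three pieces,
\[
|Lg(x_0)| \leq C + C\bigl(\omega(\varrho) + \widetilde\omega(\varrho)\bigr)\,\varrho^{\alpha-2s}.
\]
Since $\varrho^{\alpha-2s}\to\infty$ and $\omega(\varrho)+\widetilde\omega(\varrho)\to 0$ as $\varrho\to 0$, for $\varrho<\varrho_0$ small enough this right-hand side is strictly less than $c_1\varrho^{\alpha-2s}/2$, which yields $-L\psi^\alpha(x_0) \geq c_1\varrho^{\alpha-2s}/2$, i.e.\ the claim with $c_0:=c_1/2$. The genuine difficulty lies in the inner region: a naive $L^\infty$ bound $|D^2\psi^\alpha|\leq C\varrho^{\alpha-2}$ would produce an error of the \emph{same} order as the main term, so it is essential to subtract the flat profile $F^\alpha$ and exploit the cancellation of the most singular part of $D^2\psi^\alpha$ at $x_0$ to extract the extra vanishing factor $\omega(\varrho)$.
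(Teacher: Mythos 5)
Your proof is correct and follows essentially the same decomposition as the paper: subtract the linearized flat barrier $l^\alpha$ (your $F^\alpha$), obtain the main positive term $-Ll^\alpha(x_0)\gtrsim\varrho^{\alpha-2s}$ by homogeneity, and show the remainder $L(\psi^\alpha-l^\alpha)(x_0)$ is of lower order by splitting $\mathbb R^N$ into $B_{\varrho/2}$, $B_1\setminus B_{\varrho/2}$, and $\mathbb R^N\setminus B_1$, using the $D^2$-cancellation at $x_0$ in the inner region.

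There are two small places where your route diverges, both to the good. First, in the middle annulus you invoke the subadditivity inequality $|a^\alpha-b^\alpha|\le|a-b|^\alpha$ together with Lemma~\ref{PreliminaryLemma1} to get $|\psi^\alpha-l^\alpha|(x_0+y)\le C\,\omega(|y|)^\alpha|y|^\alpha$, and then apply Lemma~\ref{PreliminaryLemma2} once, with $\delta=0$, $\beta=2s-\alpha$, and modulus $\omega^\alpha$. The paper instead uses $|a^\alpha-b^\alpha|\le C|a-b|(a^{\alpha-1}+b^{\alpha-1})$, obtaining the bound $C(\psi^{\alpha-1}+l^{\alpha-1})(x_0+y)\,\omega(|y|)|y|$, and applies Lemma~\ref{PreliminaryLemma2} twice — once with $d$ and once with $l$. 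Your variant is cleaner: it needs only one application, and it sidesteps the issue that $l$ is not actually a distance function so applying Lemma~\ref{PreliminaryLemma2} to it requires a small extra argument that the paper leaves implicit. Second, you explicitly record that $|\nabla\psi(x_0)|$ is bounded away from zero for $d(x_0)$ small (a consequence of $\psi\ge C^{-1}d$ and $\psi\in C^1(\overline\Omega)$); this is used implicitly in the paper's identity $k=-L(l^\alpha)(1)>0$ but is never stated, and it is genuinely needed — if $\nabla\psi(x_0)$ could degenerate, $l$ would be constant and the main term would vanish. Your write-up is therefore a slightly streamlined and slightly more self-contained version of the same argument.
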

\begin{proof} Let $\alpha \in (0,s)$, $x_0 \in \Omega$ and $\varrho := d(x_0)$. We assume $\varrho \in (0,\varrho_0)$, for some $\varrho_0 > 0$ small which will be chosen later, and we consider the function
\[
l(x) := \big( \psi(x_0) + \nabla \psi(x_0) \cdot (x-x_0) \big)_+,
\]
satisfying $l(x_0) = \psi(x_0)$ and $\nabla \psi(x_0) = \nabla l(x_0)$. Notice that we can also assume $l > 0$ in $B_{\varrho/2}(x_0)$ and so, we obtain
\begin{equation}\label{eq:DeltaSoflaa}
-L(l^{\alpha})(x_0) = k l^{\alpha-2s}(x_0) = k \psi^{\alpha-2s}(x_0) \geq C \varrho^{\alpha - 2s},
\end{equation}
where we have used the assumptions on $\psi$ and set $k := l^{2s-\alpha}(x_0) [-L(l^{\alpha})(x_0)] = -L(l^{\alpha})(1) > 0$ (thanks to the homogeneity of $l^{\alpha}$ and \cite[Lemma 2.3]{RosOtonSerra2016:art}).

Now, from Lemma \ref{PreliminaryLemma1} we know that
\[
|\psi - l|(x_0 + y) \leq \omega(|y|)|y|, \quad y \in \mathbb{R}^N,
\]
and so, since $|a^{\alpha} - b^{\alpha}| \leq C |a-b|(a^{\alpha-1} + b^{\alpha -1})$ for all $a,b \geq 0$, it follows
\begin{equation}\label{eq:EstDiffPsiaLaB1Br2}
|\psi^{\alpha} - l^{\alpha}|(x_0 + y) \leq C (\psi^{\alpha-1} + l^{\alpha-1})(x_0 + y) \omega(|y|)|y|, \quad y \in \mathbb{R}^N,
\end{equation}
for some $C > 0$ depending only on $\Omega$ and $\alpha$, where we have used the first inequalities in \eqref{eq:PropPsi}. Furthermore, thanks to the properties of $\psi$, we have
\begin{equation}\label{eq:EstDiffSecondDerPsiaLaBr2}
|D^2(\psi^{\alpha} - l^{\alpha})| \leq C \omega(\varrho) \varrho^{\alpha - 2} \quad \text{ in } B_{\varrho/2}(x_0),
\end{equation}
for some new $C > 0$, which implies
\begin{equation}\label{eq:EstDiffPsiaLaBr2}
|\psi^{\alpha} - l^{\alpha}|(x_0 + y) \leq \|D^2(\psi^{\alpha} - l^{\alpha})\|_{L^{\infty}(B_{\varrho/2(x_0)})} |y|^2
\leq C \omega(\varrho)\varrho^{\alpha - 2}|y|^2
\end{equation}
for $y \in B_{\varrho/2}(x_0)$.
To check the validity of \eqref{eq:EstDiffSecondDerPsiaLaBr2}, we compute
\[
\begin{aligned}
(\psi^{\alpha} - l^{\alpha})_{x_jx_i}(x) &= \alpha(\alpha-1) \left[ \psi^{\alpha-2}(x) \psi_{x_i}(x) \psi_{x_j}(x) - l^{\alpha-2}(x)\psi_{x_i}(x_0)\psi_{x_j}(x_0)\right] \\
&\quad + \alpha \psi^{\alpha-1}(x) \psi_{x_ix_j}(x),
\end{aligned}
\]
and we notice that $|\psi^{\alpha-1}\psi_{x_ix_j}| \leq C \omega(\varrho)\varrho^{\alpha - 2}$ from \eqref{eq:PropPsi}.
On the other hand, we have
\[
\begin{aligned}
\psi^{\alpha-2}(x) \psi_{x_i}(x) \psi_{x_j}(x) -& l^{\alpha-2}(x)\psi_{x_i}(x_0)\psi_{x_j}(x_0) = \\
& = \psi^{\alpha-2}(x) \left[ \psi_{x_i}(x) \psi_{x_j}(x) - \psi_{x_i}(x_0)\psi_{x_j}(x_0)\right] +\\
&\qquad + \left[ \psi^{\alpha-2}(x) - l^{\alpha-2}(x)\right] \psi_{x_i}(x_0)\psi_{x_j}(x_0),
\end{aligned}
\]
and so, since $\nabla \psi$ is continuous up to $\partial \Omega$ with modulus of continuity $\omega(\cdot)$, it follows
\[
\big|\psi^{\alpha-2}(x) \left[ \psi_{x_i}(x) \psi_{x_j}(x) - \psi_{x_i}(x_0)\psi_{x_j}(x_0)\right]\big| \leq C \omega(\varrho) \varrho^{\alpha -2}, \quad x \in B_{\varrho/2}(x_0).
\]
Further, since $|D^2\psi(x)| \leq C \omega (d(x)) d^{-1}(x)$, we obtain
\[
\begin{aligned}
\big|\left[ \psi^{\alpha-2}(x) - l^{\alpha-2}(x)\right] \psi_{x_i}(x_0)\psi_{x_j}(x_0)\big| & \leq C |\psi(x) - l(x)| \big|\psi^{\alpha-3}(x) + l^{\alpha-3}(x)\big| \\
&\leq C |D^2\psi(x_0)||x-x_0|^2 \big|\psi^{\alpha-3}(x) + l^{\alpha-3}(x)\big|  \\
& \leq C \omega(\varrho) \varrho^{\alpha - 2}, \quad x \in B_{\varrho/2}(x_0),
\end{aligned}
\]
and so \eqref{eq:EstDiffSecondDerPsiaLaBr2} follows. Finally, since $\alpha \in (0,s)$ and $\psi = 0$ in $\mathbb{R}^N\setminus\Omega$,
\begin{equation}\label{eq:EstDiffPsiaLaRNB1}
|\psi^{\alpha} - l^{\alpha}|(x_0 + y) \leq C|y|^s, \quad y \in \mathbb{R}^N \setminus B_1.
\end{equation}
Consequently, if $\alpha + \gamma \not = 2s$, using \eqref{eq:DeltaSoflaa}, \eqref{eq:EstDiffPsiaLaB1Br2}, \eqref{eq:EstDiffPsiaLaBr2}, and \eqref{eq:EstDiffPsiaLaRNB1} it follows
\[
\begin{aligned}
-L (\psi^{\alpha}) (x_0) & = -L (l^{\alpha})(x_0) - L(\psi^{\alpha} - l^{\alpha})(x_0) \geq C \varrho^{\alpha - 2s}  - L(\psi^{\alpha} - l^{\alpha})(x_0) \\
& =  C \varrho^{\alpha - 2s} - \int_{\mathbb{R}^N} (\psi^{\alpha} - l^{\alpha})(x_0 + y) \frac{a(y/|y|) }{|y|^{N+2s}}dy \\
& \geq C \varrho^{\alpha - 2s} - C \int_{\mathbb{R}^N} |\psi^{\alpha} - l^{\alpha}|(x_0 + y) \frac{dy}{|y|^{N+2s}} \\
& \geq C \varrho^{\alpha - 2s} - C \omega(\varrho) \varrho^{\alpha - 2} \int_{B_{\varrho/2}} \frac{dy}{|y|^{N + 2s -2}} - C\int_{\mathbb{R}^N \setminus B_1} \frac{dy}{|y|^{N+s}} \\
& \quad - C \int_{B_1 \setminus B_{\varrho/2}} (d^{\alpha-1} + l^{\alpha-1})(x_0 + y) \frac{\omega(|y|)}{|y|^{N+2s-1}} dy \\
& \geq C \varrho^{\alpha - 2s} - C \omega(\varrho)\varrho^{\alpha - 2s} - C - C \left(1 + \widetilde{w}(\varrho)\varrho^{\alpha -2s} \right) \geq C \varrho^{\alpha - 2s},
\end{aligned}
\]
for some new constant $C > 0$ and all $0 < \varrho < \varrho_0$, where $\varrho_0 > 0$ depends only on $N$, $s$, $\alpha$, $\Omega$, $\omega$ and the ellipticity constants.
Notice that we have applied  Lemma \ref{PreliminaryLemma2} twice (once to $d(\cdot)$, once to $l(\cdot)$).
\end{proof}
\begin{proof}[Proof of Theorem \ref{Theorem:MainTheorem}]
Dividing $g$ and $u$ by a constant if necessary, we may assume $C_0=1$.
Thanks to Lemma \ref{Lemma:LInfinityBound}, we have  $\|u\|_{L^{\infty}(\Omega)} \leq C$.
On the other hand, let $\overline{g}$ denotes the extension of $g$ given by Lemma \ref{Lemma:PropExtensionBoundaryData}.
Then, the function
\[v = u - \overline{g}\]
 solves \eqref{eq:SecondNonlocalDirichletProblem} with $f := L \overline{g}$.
Moreover, thanks to Lemma \ref{Lemma:BoundFracLapExtg} we have $|f| \leq C d^{\alpha-2s}$ in $\Omega$.

\vspace{2mm}

\noindent \emph{Step 1.} We claim that
\begin{equation}\label{eq:CrucialBoundMainTheorem}
|v| \leq C d^{\alpha} \quad \textrm{in}\quad \Omega.
\end{equation}
To prove this, we consider the function
\[
\varphi(x) := M \psi^{\alpha}(x), \quad x \in \mathbb{R}^N,
\]
where $\psi$ is given by Lemma \ref{lemma-Lieberman}.
Thanks to Lemma \ref{Lemma:SupersolutionPsia}, for some $\varrho_0 > 0$ we have
\[
-L \varphi \geq M C d^{\alpha - 2s} \quad \text{ in } \Omega_{\varrho_0} := \big\{ x \in \Omega: 0 < d(x) < \varrho_0 \big\},
\]
for some constant $C > 0$ depending only on $N$, $s$, $\alpha$, $\Omega$, and the ellipticity constants.
We now compare $\varphi$ with $v$:

\begin{itemize}

\item We clearly have $\varphi = v = 0$ in $\mathbb{R}^N \setminus \Omega$, for all $M > 0$.

\item Since $\|v\|_{L^{\infty}(\Omega)} \leq C$ and $\psi^\alpha>0$ in $\Omega$, taking $M > 0$ large enough we have
\[
\varphi \geq v \quad \text{ in } \Omega \setminus \Omega_{\varrho_0}.
\]

\item Thanks to Lemma \ref{Lemma:SupersolutionPsia}, we may choose $M > 0$ large enough such that
\[
-L \varphi \geq -Lv \quad \text{ in } \Omega_{\varrho_0}.
\]

\end{itemize}
Consequently, taking $M$ large enough (depending only on $n$, $s$, $\alpha$, $\Omega$, $\lambda$, and $\Lambda$), it follows from the comparison principle that $v \leq \varphi$ in $\Omega$.
Repeating the above argument with $-v$, \eqref{eq:CrucialBoundMainTheorem} follows.

\vspace{2mm}

\noindent \emph{Step 2.} We next claim that
\begin{equation}\label{eq:AlphaHolderEstimateFinal}
[v]_{C^{0,\alpha}(B_r(x_0))} \leq {C},
\end{equation}
for any ball $B_r(x_0) \subset \Omega$ with $d(x_0) = 2r$, and some constant ${C}$ independent of $x_0 \in \Omega$ and $r > 0$.

To do this, we recall\footnote{This follows for example from \cite[Theorem 1.1]{RosOtonSerra2016Bis:art} and \cite[Theorem 5.1]{CS11}.} that if $w$ is a solution to $-L w = f$ in $B_2$ then
\begin{equation}\label{eq:AlphaHolderInteriorEstimate}
[w]_{C^{0,\alpha}(B_1)} \leq C \left( \|f\|_{L^\infty(B_2)} + \int_{\mathbb{R}^N} \frac{|w(x)|}{1 + |x|^{N+2s}} \,dx \right).
\end{equation}
Now, for any $x_0 \in \Omega$ and $r := d(x_0)/2$, we take $w(x) := v(x_0 + rx)$,
which satisfies
\[
-L w(x) = f_r(x) := r^{2s} f(x_0 + rx) \leq C r^{2s} d^{\alpha-2s}(x_0 + rx) \leq C r^{\alpha} \quad \text{ in } B_2,
\]
since $x_0 + rx \in B_{2r}(x_0)$.
On the other hand, since $|v| \leq C d^{\alpha}$ in $\Omega$ (thanks to Step 1), we have
\[
|w(x)| = |v(x_0 + rx)| \leq C d^{\alpha}(x_0 + rx) \leq C r^{\alpha}(1 + |x|^{\alpha}) \quad \text{ in } \mathbb{R}^N,
\]
where $C > 0$ is a new constant independent of $x_0 \in \Omega$ and $r > 0$, and so
\[
\int_{\mathbb{R}^N} \frac{|w(x)|}{1 + |x|^{N+2s}}\,dx \leq C r^{\alpha} \int_{\mathbb{R}^N} \frac{1 + |x|^{\alpha}}{1 + |x|^{N+2s}}\,dx \leq C r^{\alpha},
\]
since $\alpha < 2s$.
Above, we have used the fact that
\[
d(x_0 + rx) = \inf_{y \in \partial\Omega} |y - (x_0 + rx)| \leq \inf_{y \in \partial\Omega} |y - x_0| + r|x| = d(x_0) + r|x| =  r(2 + |x|),
\]
whenever $x_0 + rx \in \Omega.$ Consequently, applying \eqref{eq:AlphaHolderInteriorEstimate}, we obtain
\[
[w]_{C^{0,\alpha}(B_1)} \leq C r^{\alpha},
\]
from which \eqref{eq:AlphaHolderEstimateFinal} immediately follows.

\vspace{2mm}

\noindent \emph{Step 3.} We can now finish the proof.
Indeed, take $x,y \in \overline{\Omega}$, with $r = |x-y|$ and $\varrho = \min\{d(x),d(y)\}$.
There are two possibilities:

\begin{itemize}

\item If $\varrho \leq 2r$, assuming for instance $\varrho = d(y)$ and recalling \eqref{eq:CrucialBoundMainTheorem}, we have
\[
|v(x) - v(y)| \leq |v(x)| + |v(y)| \leq C \left[ d^{\alpha}(x) + d^{\alpha}(y) \right] \leq C \left[ (\varrho + r)^{\alpha} + \varrho^{\alpha} \right] \leq \widetilde{C} r^{\alpha},
\]
for some constant $\widetilde{C} > 0$ independent of $x,y \in \Omega$ and $r > 0$.

\item If $\varrho > 2r$ and $\varrho = d(y)$, it follows that $B_{2r}(x) \subset \Omega$, and thus thanks to \eqref{eq:AlphaHolderEstimateFinal}
\[
|v(x) - v(y)| = r^{\alpha} \frac{|v(x) - v(y)|}{|x-y|^{\alpha}} \leq r^{\alpha} \sup_{y \in B_r(x)} \frac{|v(x) - v(y)|}{|x-y|^{\alpha}} \leq r^{\alpha} [v]_{C^{0,\alpha}(B_r(x))} \leq \overline{C} r^{\alpha}.
\]

\end{itemize}

Putting together the last two inequalities we find
\[
|v(x) - v(y)| \leq C |x - y|^{\alpha} \quad\textrm{ for all }x,y \in \overline{\Omega}.
\]
This implies that $v \in C^{0,\alpha}(\overline{\Omega})$ and, therefore, that $u \in C^{0,\alpha}(\overline{\Omega})$.
\end{proof}

\begin{proof}[Proof of Proposition \ref{Proposition:MainProp}]
Let $\Omega := \mathbb{R}_+^2 := \{(x_1,x_2) \in \mathbb{R}^2: x_2 > 0\}$ and $u$ solve
\begin{equation}\label{eq:DirichletProblemNonlocalCase}
\left\{\begin{array}{rcll}
(-\Delta)^s u &=& 0 \quad &\text{in } \Omega, \\
u &= & g            \quad &\text{in } \mathbb{R}^2 \setminus \Omega,
\end{array}\right.
\end{equation}
with  $g(x) := \min\{|x|^s,\,1\}$.
The solution to \eqref{eq:DirichletProblemNonlocalCase} is given by the Poisson kernel
\[
u(x_1,x_2) = c_sx_2^s \int_{\mathbb{R}} \int_{-\infty}^0 \frac{g(z_1,z_2)}{|z_2|^s [(x_1-z_1)^2 + (x_2-z_2)^2]} dz_1dz_2;
\]
see \cite{BlumGetoorRay1961:art}.
Notice that, since $g\geq0$, we clearly have
\[u(x_1,x_2) \geq  c_sx_2^s\int_S \frac{g(z_1,z_2)}{|z_2|^s [z_1^2 + (t-z_2)^2]} dz_1dz_2, \]
where $S := \{x_2 \leq -|x_1|\} \cap B_1$.
Thus, in order to prove that $u \not\in C^{0,s}(\overline{\Omega})$, it is enough to show that the last integral
is unbounded for $x_1 = 0$ and $x_2 > 0$ small.

For this, we set $t = x_2$,
and we consider
\[
\begin{aligned}
\int_S \frac{g(z_1,z_2)}{|z_2|^s [z_1^2 + (t-z_2)^2]} dz_1dz_2 & = \int_S \frac{(z_1^2 + z_2^2)^{\frac{s}{2}}}{|z_2|^s [z_1^2 + (t-z_2)^2]} dz_1dz_2 \\
& = \int_0^1 \int_{\frac{5}{4}\pi}^{\frac{7}{4}\pi} \frac{r^{s+1}}{r^s|\sin \theta|^s (r^2 + 2r|\sin\theta| t + t^2)} d\theta dr \\
& =  \int_{\frac{5}{4}\pi}^{\frac{7}{4}\pi} |\sin \theta|^{-s} \int_0^1 \frac{r}{r^2 + 2r|\sin\theta| t + t^2} dr d\theta,
\end{aligned}
\]
where we have passed to polar coordinates $z_1 = r\cos\theta$, $z_2 = r \sin \theta$.
Since $|\sin \theta| \in [\sqrt{2}/2,1]$, it follows that
\[
\begin{aligned}
\int_0^1 \frac{r}{r^2 + 2r|\sin\theta| t + t^2} dr &\geq \int_0^1 \frac{r}{r^2 + 2rt + t^2} dr = \frac{1}{2} \int_0^1 \frac{2(r+t)}{(r + t)^2} dr - t \int_0^1 \frac{dr}{(r+t)^2} \\
& = \ln \left(\frac{1}{t}\right) + \ln (1+t) - \frac{1}{1+t} = \ln \left(\frac{1}{t}\right) + O(1)
\end{aligned}
\]
as $t \to 0^+$.
Consequently, setting $\kappa_s := \int_{\frac{5}{4}\pi}^{\frac{7}{4}\pi} |\sin \theta|^{-s}$, we obtain
\[
u(0,t) \geq c_st^s \int_S \frac{g(z_1,z_2)}{|z_2|^s [z_1^2 + (t-z_2)^2]} dz_1dz_2 \geq c_s\kappa_s t^s\ln \left(\frac{1}{t}\right) + O(t^s), \quad \text{as } t \to 0^+,
\]
which shows that $u\notin C^{0,s}(\overline{\Omega})$. 

Finally, notice that using such function $u$ one can actually construct a counterexample in a bounded domain, too.
\end{proof}

\begin{proof}[Proof of Proposition \ref{Proposition:MainProp2}]
As in the proof of Theorem \ref{Theorem:MainTheorem}, we have that $v = u - \overline{g}$ solves \eqref{eq:SecondNonlocalDirichletProblem}, with $|f| \leq Cd^{\alpha - 2s}$ in $\Omega$.
Setting $\theta:=\min\{\alpha-s,\gamma\}>0$, we can apply \cite[Theorem 1.2, Proposition 3.2]{RosOtonSerra2017:art} (cfr. with \cite[Remark 3.4]{RosOtonSerra2017:art}) to conclude that $v \in C^{0,s}(\overline{\Omega})$.
\end{proof}

%
%
%
%
%
%
%
%
%
%
%
\section{Lipschitz domains}\label{Section:LipschitzDomains}
We focus now on the case in which $\Omega$ is Lipschitz.
Again, the idea is to construct a suitable super-solution for problem \eqref{eq:SecondNonlocalDirichletProblem} exploiting that the r.h.s. explode as $d^{\beta-2s}$ near the boundary $\partial\Omega$, for some $\beta \in (0,1)$.
Since the domain is only Lipschitz and the operator $L$ is not homogeneous, the strategy followed in the above section cannot work in this more general framework.
The construction of a new barrier is the main difficulty here, and it works as follows.

For any fixed direction $e \in \mathbb{S}^{N-1}$ and $\eta > 0$, we consider the function
\[
\psi(x) := e \cdot x + \eta |x| \left( 1 - \frac{(e \cdot x)^2}{|x|^2}\right), \quad x \in \mathbb{R}^N,
\]
and, for any $\beta \in (0,1)$, we define
\begin{equation}\label{eq:SupersolutionLipschitz}
\Phi_{\beta} := (\psi_+)^{\beta}.
\end{equation}
Notice that $\Phi_{\beta} \in C^{0,\beta}(\mathbb{R}^N)$ and it is positive in the cone
\begin{equation}\label{eq:ConeDefinition}
\mathcal{C}_{-\eta} := \left\{ x \in \mathbb{R}^N : \frac{e \cdot x}{|x|} > - \eta \left( 1 - \frac{(e \cdot x)^2}{|x|^2} \right) \right\},
\end{equation}
while zero otherwise. We begin with the following lemma (cfr. also with \cite{DiPierroSoaveValdinoci:art,RosOtonSerra2018:art}).
\begin{Lemma}\label{Lemma:SuperLipschitz}
Let $s \in (0,1)$, $e \in \mathbb{S}^{N-1}$ and $L$ as in \eqref{eq:NonlocalNonHomogenousOperator}-\eqref{eq:KernelProp}. Then for all $\eta > 0$, there exists $\beta_0 \in (0,1)$ such that the function $\Phi_{\beta}$ defined in \eqref{eq:SupersolutionLipschitz} satisfies
\begin{equation}\label{eq:SupersolutionLipschitzProb}
\left\{\begin{array}{rcll}
-L \Phi_{\beta} &\geq & c_0 d^{\beta-2s}>0 \quad &\text{in }  \mathcal{C}_{-\eta} \\
\Phi_{\beta} & =& 0             \quad &\text{in } \mathbb{R}^N \setminus \mathcal{C}_{-\eta},
\end{array}\right.
\end{equation}
for all $\beta \in (0,\beta_0]$ and some $c_0 > 0$.
The constants $\beta_0$ and $c_0$ depend only on $n$, $s$, $\lambda$, $\Lambda$, and $\eta$.
\end{Lemma}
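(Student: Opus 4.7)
The plan is to exploit the explicit homogeneous structure of $\psi$. Writing $\psi(r\xi)=r\,h(\xi)$ with $h\in C^\infty(\mathbb{S}^{N-1})$ strictly positive on $\mathcal{C}_{-\eta}\cap\mathbb{S}^{N-1}$ and vanishing linearly on its relative boundary, the function $\Phi_\beta=\psi_+^\beta$ is positively homogeneous of degree $\beta$, belongs to $C^{0,\beta}(\mathbb{R}^N)$, and is $C^\infty$ on $\mathcal{C}_{-\eta}\setminus\{0\}$. Moreover $\mathcal{C}_{-\eta}$ is invariant under dilations, so $d(\rho x)=\rho\,d(x)$ for every $\rho>0$.

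The first step is a scaling reduction. Given $x_0\in\mathcal{C}_{-\eta}$, write $x_0=\rho\xi_0$ with $\xi_0\in\mathbb{S}^{N-1}$, and substitute $y=\rho z$ in the integral defining $-L\Phi_\beta(x_0)$. Using the homogeneity of $\Phi_\beta$ this gives
\[
-L\Phi_\beta(x_0)\;=\;\rho^{\beta-2s}\bigl(-\widetilde L\Phi_\beta(\xi_0)\bigr),\qquad \widetilde K(z):=\rho^{N+2s}K(\rho z),
\]
where the rescaled kernel $\widetilde K$ satisfies the \emph{same} ellipticity bounds \eqref{eq:KernelProp}. Since $d(x_0)^{\beta-2s}=\rho^{\beta-2s}d(\xi_0)^{\beta-2s}$, the statement reduces to the normalized inequality
\[
-\widetilde L\Phi_\beta(\xi_0)\;\geq\; c_0\,d(\xi_0)^{\beta-2s}\qquad\forall\,\xi_0\in\mathcal{C}_{-\eta}\cap\mathbb{S}^{N-1},
\]
uniformly over every operator in the ellipticity class associated to $(\lambda,\Lambda,s)$.

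I would then split into two regimes. In the \emph{interior regime} ($d(\xi_0)\geq\delta_0$ for a fixed $\delta_0>0$), $\Phi_\beta$ is smooth near $\xi_0$ with uniform bounds, and $\Phi_\beta\to \chi_{\mathcal{C}_{-\eta}}$ pointwise as $\beta\to 0^+$. A direct computation on the characteristic function gives $-\widetilde L\chi_{\mathcal{C}_{-\eta}}(\xi_0)\geq c>0$ uniformly in $\xi_0$ and in the operator class, because the cone's complement has positive angular mass. Dominated convergence (valid since $\beta<2s$) combined with a compactness argument on the rescaled operator class then yields $-\widetilde L\Phi_\beta(\xi_0)\geq c/2$ for all $\beta\leq\beta_0$ small, and the desired estimate follows since $d(\xi_0)^{\beta-2s}\leq C$. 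In the \emph{boundary regime} ($d(\xi_0)<\delta_0$), I would pick a nearest point $z_0\in\partial\mathcal{C}_{-\eta}$ (so $|z_0|\sim 1$ for $\delta_0$ small), set $l(x):=\nabla\psi(z_0)\cdot(x-z_0)$, and mimic Lemma \ref{Lemma:SupersolutionPsia}. The half-space function $l_+^\beta$ is positively $\beta$-homogeneous relative to $z_0$, so the scaling trick applied to $\widetilde L$ centred at $z_0$ gives
\[
-\widetilde L(l_+^\beta)(\xi_0)\;\geq\; C\,l(\xi_0)^{\beta-2s}\;\geq\; c\,d(\xi_0)^{\beta-2s},
\]
with the positive constant produced again by the $\beta\to 0^+$ compactness argument, this time applied to the half-space characteristic function. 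The remainder $-\widetilde L(\Phi_\beta-l_+^\beta)(\xi_0)$ is controlled by splitting the domain into $B_{d(\xi_0)/2}(\xi_0)$ (using the smoothness of $\psi$, which gives $|D^2(\Phi_\beta-l_+^\beta)|\leq C\,d(\xi_0)^{\beta-2}$), the intermediate annulus up to $B_1(\xi_0)$ (via the $C^{0,\beta}$ bound plus a coarea estimate on the fixed Lipschitz cone in the spirit of Lemma \ref{PreliminaryLemma2}), and $\mathbb{R}^N\setminus B_1(\xi_0)$ (via $\Phi_\beta(y)\leq C(1+|y|)^\beta$ together with $\beta<2s$).

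The principal difficulty lies in the quantitative error analysis of the boundary regime: each contribution must be strictly dominated by the leading singular term $c\,d(\xi_0)^{\beta-2s}$, uniformly in $\xi_0\in\mathcal{C}_{-\eta}\cap\mathbb{S}^{N-1}$ and over the rescaled operator class, with constants tracked explicitly in the aperture parameter $\eta$. Unlike in the $C^1$ case, there is no modulus-of-continuity factor $\omega(\varrho)$ to exploit; the required smallness must instead be extracted from $\beta$ itself, and this is precisely why the statement is restricted to $\beta\leq\beta_0$, with $\beta_0=\beta_0(N,s,\lambda,\Lambda,\eta)$ chosen small enough (in particular strictly below $2s$) to make every error integral negligible compared with the leading term.
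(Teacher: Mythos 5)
Your scaling reduction is correct in spirit, but you normalize to the unit sphere $\mathcal{C}_{-\eta}\cap\mathbb{S}^{N-1}$, where $d(\xi_0)$ can be arbitrarily small, and this is what forces the two-regime split and the problematic boundary regime. The paper instead normalizes to the translated conical surface $e + \partial\mathcal{C}_{-\eta}$, using the fact that $\{\lambda(e+\partial\mathcal{C}_{-\eta})\}_{\lambda>0}=\mathcal{C}_{-\eta}$. The crucial payoff is that ${\rm dist}(\partial\mathcal{C}_{-\eta},\,e+\partial\mathcal{C}_{-\eta})>0$, so $d$ is bounded above and below by positive constants depending only on $\eta$ on the whole normalization slice, and $d^{\beta-2s}\asymp 1$ there. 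Consequently one only ever needs what you call the ``interior regime'': uniform $C^2$ bounds for $\Phi_\beta$ in fixed balls around the slice, pointwise convergence $\Phi_\beta\to\chi_{\mathcal{C}_{-\eta}}$, dominated convergence, the lower bound $-L\chi_{\mathcal{C}_{-\eta}}\geq c>0$ (coming from the positive angular mass of $\mathbb{R}^N\setminus\mathcal{C}_{-\eta}$), and a compactness-in-$(x,L)$ argument to make the convergence uniform. That is precisely what the paper does; it is your interior regime and nothing else.

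Your boundary regime is a genuine gap. You propose to mimic Lemma~\ref{Lemma:SupersolutionPsia}, but as you yourself note there is no modulus-of-continuity factor $\omega(\varrho)$ available, and your fallback claim that ``the required smallness must instead be extracted from $\beta$ itself'' is unsubstantiated. In the intermediate annulus the error $\Phi_\beta - l_+^\beta$ does not become small as $\beta\to 0$ (pointwise it tends to $\chi_{\mathcal{C}_{-\eta}}-\chi_H$, which has positive measure where it is $\pm 1$), and the resulting integral is of the same order $d^{\beta-2s}$ as the leading term, not $o(d^{\beta-2s})$ uniformly in $d$; the tentative $d^{2\beta}$ gain from the quadratic approximation near $z_0$ degenerates as $d\to 0$ for fixed $\beta$. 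Moreover, Lemma~\ref{PreliminaryLemma2} is not directly applicable since the cone is only Lipschitz at the origin, so the level sets of $d$ are not $C^1$ globally. None of this machinery is needed if one chooses the normalization slice the paper uses; I would recommend you redo the scaling so that the reference set has $d\asymp 1$, after which your interior-regime argument, together with the compactness over the ellipticity class, closes the proof.
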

\begin{proof} As in \cite[Lemma 4.1]{RosOtonSerra2017:art}, using the homogeneity of $\Phi_{\beta}$ and $d^{\beta-2s}$, and the properties of the kernel $K$, it is enough to prove
\[
-L \Phi_{\beta} \geq C > 0 \quad \text{ in } e + \partial \mathcal{C}_{-\eta}.
\]
This is because $\{\lambda (e + \partial \mathcal{C}_{-\eta}) \}_{\lambda > 0} = \mathcal{C}_{-\eta}$.

Now, let us take $\varrho = \varrho(\eta) > 0$ so that
\[
0 < \varrho < \inf_{\substack{x \in \partial \mathcal{C}_{-\eta} \\ y \in e + \partial \mathcal{C}_{-\eta}}} |x-y| := \text{dist} \left(\partial \mathcal{C}_{-\eta}, e + \partial \mathcal{C}_{-\eta} \right),
\]
and notice that
\[|\nabla \psi| \leq C \quad \text{ and } \quad |D^2 \psi| \leq C|x|^{-1} \quad \text{in } \mathbb{R}^N \setminus \{0\},
\]
for some constant $C$ depending on $\eta$.
Consequently, it follows that
\begin{equation}\label{eq:C2boundsonPhialpha}
\|\Phi_{\beta}\|_{C^2(B_{\varrho}(x))} \leq C \quad \text{ for all } x \in e + \partial \mathcal{C}_{-\eta},
\end{equation}
for some new constant $C > 0$ independent of $x$.
On the other hand, we have
\begin{equation}\label{eq:AEconvergencePhiAlpha}
\Phi_{\beta} \to \Phi_0 := \chi_{\mathcal{C}_{-\eta}} \quad \text{ pointwise in } \mathbb{R}^N
\end{equation}
as $\beta \to 0^+$, and moreover
\begin{equation}\label{eq:CalphaboundPhialpha}
|\Phi_{\beta}(x) - \Phi_{\beta}(x-y)| \leq C(1 + |y|^{\beta_0}) \quad \text{ for all } x \in e + \mathbb{R}^N \setminus \mathcal{C}_{-\eta}, \; y \in \mathbb{R}^N,
\end{equation}
for all $\beta \in (0,\beta_0]$ and some constant $C$ independent of $x,y \in \mathbb{R}^N$ and independent of $\beta$.
Finally, let us define
\[
h_{\beta}(x,y) := \left( \Phi_{\beta}(x) - \frac{\Phi_{\beta}(x+y) + \Phi_{\beta}(x-y)}{2} \right) K(y),
\]
Thus, using \eqref{eq:C2boundsonPhialpha} and \eqref{eq:CalphaboundPhialpha}, we obtain
\[
|h_{\beta}(x,y)| \leq C |y|^{-N - 2(1-s)} \quad \text{ for all } x \in e + \partial \mathcal{C}_{-\eta}, \; y \in B_{\varrho},
\]
and
\[
|h_{\beta}(x,y)| \leq C \left( 1 + |y|^{- N - 2s + \beta_0} \right) \quad \text{ for all } x \in e + \partial \mathcal{C}_{-\eta}, \; y \in \mathbb{R}^N,
\]
for some constant $C$ independent of $x$ and $y$ (actually it depends only on $n$, $s$, $\lambda$, $\Lambda$, $\alpha$, and $\Omega$). Recalling \eqref{eq:AEconvergencePhiAlpha}, we can apply Lebesgue dominated convergence theorem to deduce
\begin{equation}\label{eq:UniformConvergenceOperator}
\begin{aligned}
-L \Phi_{\beta}(x) = \int_{B_{\varrho}} h_{\beta}(x,y) dy  +  \int_{\mathbb{R}^N \setminus B_{\varrho}} h_{\beta}(x,y) dy \to -L \Phi_0(x),
\end{aligned}
\end{equation}
as $\beta \to 0$, for every fixed $x \in e + \partial \mathcal{C}_{-\eta}$.
Actually, the above convergence is uniform w.r.t. $x \in e + \partial\mathcal{C}_{-\eta}$ and $L$.
Indeed, assume by contradiction that for any sequence $\beta_j \to 0^+$, there exist a sequence $\{x_j\}_j \in e + \partial \mathcal{C}_{-\eta}$ and a sequence of operators $\{L_j\}_j$ satisfying \eqref{eq:NonlocalNonHomogenousOperator}-\eqref{eq:KernelProp} such that
\begin{equation}\label{eq:ContradictionInequality}
|L_j \Phi_{\beta_j}(x_j) - L_j\Phi_0(x_j)| \geq \varepsilon
\end{equation}
for some $\varepsilon > 0$ and all $j \in \mathbb{N}$. Then, using again the bounds in \eqref{eq:C2boundsonPhialpha} and \eqref{eq:CalphaboundPhialpha}, we easily deduce that $|L_j \Phi_{\beta_j}(x_j)| \leq C$, for some constant $C$ independent of $j \in \mathbb{N}$ and so, up to passing to a subsequence, we obtain that $L_j\Phi_{\beta_j}(x_j)$ has a finite limit as $j \to +\infty$.
Further, using the pointwise convergence in \eqref{eq:AEconvergencePhiAlpha} and a standard diagonal procedure, we can extract a subsequence $\{\beta_{j_k}\}_k \subset \{\beta_j\}_j$ for which $|L_k \Phi_{\beta_{j_k}}(x_k) - L_k\Phi_0(x_k)| \leq \varepsilon/2$ for all $k \in \mathbb{N}$, obtaining a contradiction with \eqref{eq:ContradictionInequality}.

On the other hand, since $\Phi_0=\chi_{\mathcal C_{-\eta}}$, we obtain
\[
-L \Phi_0(x) = \int_{\mathbb{R}^N} \left( \Phi_0(x) - \Phi_0(y) \right) K(x-y) dy = \int_{\mathbb{R}^N \setminus \mathcal{C}_{-\eta}} K(x-y) dy.
\]
Consequently, writing $x = e + P$ with $P \in \partial \mathcal{C}_{-\eta}$ and noticing that $\mathbb{R}^N \setminus \mathcal{C}_{-\eta} \subset - P + \mathbb{R}^N \setminus \mathcal{C}_{-\eta}$, it follows
\[
\begin{aligned}
-L \Phi_0(x) &= \int_{- P + \mathbb{R}^N \setminus \mathcal{C}_{-\eta}} K(y-e) dy \geq \int_{\mathbb{R}^N \setminus \mathcal{C}_{-\eta}} K(y-e) dy \geq c > 0,
\end{aligned}
\]
for some $c > 0$ independent of $x \in e + \partial \mathcal{C}_{-\eta}$. Thus, recalling the uniform convergence in \eqref{eq:UniformConvergenceOperator}, we deduce the existence of a small $\beta_0 \in (0,1)$ such that
\[
-L\Phi_{\beta}(x) \geq c/2 > 0,
\]
for all $x \in e + \partial \mathcal{C}_{-\eta}$ and $\beta \in (0,\beta_0]$.
\end{proof}

\begin{proof}[Proof of Theorem \ref{Theorem:MainTheoremLipschitz}] Let $\beta_0>0$ be given by Lemma \ref{eq:SupersolutionLipschitzProb}, and assume without loss of generality that $\beta_0\leq\alpha_0$, where $\alpha_0$ is given by Lemma \ref{Lemma:PropExtensionBoundaryDataLip}.

Let  $v = u - \overline{g}$, where $\overline{g}$ is chosen as in Lemma \ref{Lemma:PropExtensionBoundaryDataLip}.
Thanks to Lemma \ref{Lemma:BoundFracLapExtg}(b), we have $|L\overline{g}| \leq C d^{\alpha-2s}$ in $\Omega$, and so
\begin{equation}\label{eq:BoundonLvLipschitz}
|L v| \leq C d^{\alpha-2s} \quad \text{ in } \Omega,
\end{equation}
since $v$ satisfies \eqref{eq:SecondNonlocalDirichletProblem} with $f = L\overline{g}$.
We want to prove that
\begin{equation}\label{eq:BoundDecayEstimateLipschitz}
|v(x)| \leq C|x - z_0|^{\alpha} = Cd^{\alpha}(x) \quad \text{ in } \Omega,
\end{equation}
where $z_0 \in \partial\Omega$ is the projection of $x$ on $\partial\Omega$.
To do this, since $\partial\Omega$ is Lipschitz we have that for any  point $z_0 \in \partial \Omega$, there are $r,\eta > 0$ such that
\[
B_{2r}(z_0) \cap \Omega \subseteq B_{2r}(z_0) \cap \left( z_0 + \mathcal{C}_{-\eta} \right),
\]
where $\mathcal{C}_{-\eta}$ is defined in \eqref{eq:ConeDefinition}. Moreover, we can choose $r > 0$ and $\eta > 0$ independently of $z_0 \in \Omega$ (i.e. $x \in \Omega$).

Now, we consider the truncation
\[
w := v \chi_{B_{2r}(z_0)},
\]
which satisfies $|Lw| \leq C d^{\alpha -2s}$ in $B_r(z_0) \cap \Omega$, thanks to \eqref{eq:BoundonLvLipschitz}.

On the other hand, we consider the function
\[
\varphi(x) := M \Phi_{\alpha}(x - z_0) \quad x \in \mathbb{R}^N,
\]
where $\Phi_{\alpha}$ is defined in \eqref{eq:SupersolutionLipschitz}, and $M>0$ is to be chosen.
Thanks to Lemma \ref{Lemma:SuperLipschitz},
\[
-L \varphi \geq Mc_0 d^{\alpha - 2s}>0 \quad \text{ in } B_r(z_0) \cap \left( z_0 + \mathcal{C}_{-\eta} \right).
\]
Now, choose $M > 0$ so that
\[
\varphi \geq w \quad \text{ in } \mathbb{R}^N \setminus (B_r(z_0) \cap \Omega)
\]
---which is possible since $w$ is bounded---, and so that
\[
-L \varphi \geq Mc_0d^{\alpha - 2s} \geq C d^{\alpha -2s} \geq -Lw \quad \text{ in } B_r(z_0) \cap \Omega.
\]
By the maximum principle, we deduce
\[
v=w \leq C \varphi \leq C|x - z_0|^{\beta} = C d^{\beta}(x) \quad \text{ in } B_r(z_0) \cap \Omega
\]
and, repeating the above argument with $w$ replaced by $-w$, \eqref{eq:BoundDecayEstimateLipschitz} follows.

To finish the proof, we can repeat the proof of Theorem \ref{Theorem:MainTheorem}, combing \eqref{eq:BoundDecayEstimateLipschitz} with the interior estimate \eqref{eq:AlphaHolderInteriorEstimate}.
\end{proof}

As a consequence, we find:

\begin{Corollary}\label{cor-existence}
Let $N$, $\Omega$, $s$, $L$, and $g$ be as in Theorem \ref{Theorem:MainTheoremLipschitz}.
Then, there exists a viscosity solution $u\in C(\overline\Omega)$ to \eqref{eq:NonlocalDirichletProblem}.
\end{Corollary}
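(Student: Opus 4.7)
My plan is to establish existence via an approximation scheme, using Theorem \ref{Theorem:MainTheoremLipschitz} as the crucial \emph{a priori} estimate that furnishes compactness. I would first construct a sequence $\{g_k\}_{k\in\mathbb{N}} \subset C^\infty(\mathbb{R}^N)$ converging to $g$ locally uniformly in $\mathbb{R}^N$, satisfying the bounds \eqref{eq:AssumptionsBoundaryData}-\eqref{eq:GrowthCondition} with a common constant (say $2C_0$), and coinciding with $g$ outside some large ball $B_R \supset\supset \Omega$. A natural choice is to mollify the extension $\overline g$ provided by Lemma \ref{Lemma:PropExtensionBoundaryDataLip} at scale $1/k$ inside $\Omega$ and glue it back to $g$ outside $B_R$ via a smooth cutoff; the uniform control on the Hölder seminorm on $\partial\Omega$ survives because mollification preserves \eqref{eq:AssumptionsBoundaryData} up to a universal constant.

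Next, for each $k$, since $g_k$ is smooth in a neighborhood of $\overline\Omega$, the function $v_k := u_k - g_k$ (formally) satisfies a homogeneous Dirichlet problem $L v_k = -L g_k$ in $\Omega$ with $v_k = 0$ in $\mathbb{R}^N \setminus \Omega$ and right-hand side $f_k := -L g_k \in L^\infty(\Omega)$; a computation analogous to Lemma \ref{Lemma:BoundFracLapExtg} bounds the local integral using the full smoothness of $g_k$. Existence of a viscosity solution $v_k \in C(\overline\Omega)$ for such a problem is standard (e.g.\ by Perron's method using the barriers constructed in Sections \ref{Section:C1Domains}-\ref{Section:LipschitzDomains}, or by the comparison/existence theory in \cite{BarlesImbert2008:art, CS09}). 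Setting $u_k := v_k + g_k$ then yields a viscosity solution of \eqref{eq:NonlocalDirichletProblem} with exterior datum $g_k$.

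Applying Theorem \ref{Theorem:MainTheoremLipschitz} to each $u_k$ gives
\[
\|u_k\|_{C^{0,\alpha}(\overline\Omega)} \leq 2CC_0,
\]
with $C$ independent of $k$. By Arzelà-Ascoli, a subsequence converges uniformly on $\overline\Omega$ to some $u \in C^{0,\alpha}(\overline\Omega)$, and because $g_k$ coincides with $g$ outside $B_R$ and converges locally uniformly to $g$, we also have $u_k \to u$ locally uniformly in $\mathbb{R}^N$ with $u = g$ in $\mathbb{R}^N \setminus \Omega$. The standard stability theorem for nonlocal viscosity solutions (see \cite{BarlesImbert2008:art, CS09}) then shows that $u$ solves $L u = 0$ in $\Omega$ in the viscosity sense. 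I expect the one delicate step to be the stability passage in the nonlocal setting: to evaluate $L$ on smooth test functions touching $u$ from above/below, one needs uniform tail control on $u_k$, which is supplied precisely by the uniform growth condition \eqref{eq:GrowthCondition} satisfied by each $g_k$ together with the $L^\infty$ bound inside $\Omega$ coming from Lemma \ref{Lemma:LInfinityBound}. Once this is verified, the conclusion $u \in C(\overline\Omega)$ is immediate from the Hölder regularity.
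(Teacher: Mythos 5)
Your overall strategy---approximate the data, invoke Theorem \ref{Theorem:MainTheoremLipschitz} as a uniform \emph{a priori} $C^{0,\alpha}$ bound, pass to the limit via Arzel\`a--Ascoli, and conclude by stability of viscosity solutions---matches the paper's in its last three steps. Where you diverge is in what gets approximated. The paper approximates \emph{both} the exterior datum (by $g_\varepsilon\in C^\infty_c$) \emph{and} the domain, replacing $\Omega$ by smooth domains $\Omega_\varepsilon\subset\subset\Omega$ that are uniformly Lipschitz and converge to $\Omega$ in Hausdorff distance. The point of shrinking the domain is that existence for the approximate problems then follows immediately from a known Perron-type result on \emph{smooth} domains (Mou, Theorem~5.6), and since $\partial\Omega_\varepsilon$ lies well inside $\Omega$, the datum $g_\varepsilon$ is automatically smooth near $\partial\Omega_\varepsilon$ and \eqref{eq:AssumptionsBoundaryData} is easy to verify uniformly in $\varepsilon$ relative to $\partial\Omega_\varepsilon$.

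You keep $\Omega$ fixed and only regularize $g$. This is a genuinely different route, and it would also work in principle, but it loads all the difficulty onto the sentence ``existence of a viscosity solution $v_k\in C(\overline\Omega)$ for such a problem is standard.'' That is precisely what the paper's domain-approximation trick is designed to avoid proving: Perron's method on a merely Lipschitz domain requires barriers at every boundary point to get continuity up to $\partial\Omega$, and while the barriers $\Phi_\beta$ of Lemma \ref{Lemma:SuperLipschitz} do the job, you would need to actually carry out that Perron argument (or locate a reference that covers Lipschitz domains), not merely cite \cite{BarlesImbert2008:art,CS09}, which do not give boundary continuity on Lipschitz sets for free. A second, smaller issue is the construction of $g_k$: mollifying $\overline g$ at scale $1/k$ changes its values on $\partial\Omega$, so if you keep $g_k=g$ in $\mathbb R^N\setminus\Omega$ while replacing $g$ on $\partial\Omega$ by a mollified value, condition \eqref{eq:AssumptionsBoundaryData} for $g_k$ acquires an additive error $\|g-g_k\|_{L^\infty(\partial\Omega)}$ that does not vanish as $|x-z|\to 0$. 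You can repair this (e.g.\ mollify $g$ itself in a full neighborhood of $\partial\Omega$ rather than just inside $\Omega$), but the construction needs to be done more carefully than stated. In short: same compactness-plus-stability skeleton, different approximation scheme; the paper's choice to also approximate the domain buys a clean off-the-shelf existence result, whereas your version trades that for a non-trivial Perron argument on the Lipschitz domain that should be made explicit.
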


\begin{proof}
Let $\Omega_\varepsilon\subset\subset \Omega$ be a sequence of smooth domains such that $\Omega_\varepsilon\to \Omega$ in the Hausdorff distance, and such that $\Omega_\varepsilon$ are Lipschitz sets (uniformly in $\varepsilon$).

Let $\bar g$ be a H\"older continuous extension of $g$ inside $\Omega$, and let $g_\varepsilon\in C^\infty_c(\mathbb R^N)$ be such that $g_\varepsilon\to \bar g$ uniformly in $\Omega$, $g_\varepsilon\to g$ a.e. in $\mathbb R^N\setminus \Omega$, and such that \eqref{eq:AssumptionsBoundaryData} holds uniformly in $\varepsilon$.

Then, by \cite[Theorem 5.6]{Mou2017:art} there exists a viscosity solution $u_\varepsilon$ to $Lu_\varepsilon=0$ in $\Omega_\varepsilon$, $u_\varepsilon=g_\varepsilon$ in $\Omega_\varepsilon^c$.
By Theorem  \ref{Theorem:MainTheoremLipschitz}, we have a uniform bound $\|u_\varepsilon\|_{C^{0,\alpha}(\overline\Omega)}\leq C$, with $\alpha>0$.
By Arzel\`a-Ascoli theorem, we have $u_\varepsilon\to u$ uniformly in $\overline\Omega$, up to subsequence, where $u\in C^{0,\alpha}(\overline \Omega)$.
Moreover, $u_\varepsilon\to u$ almost everywhere  in $\mathbb R^N\setminus \Omega$, where $u:=g$ outside $\Omega$.
Then, by stability of viscosity solutions (see, e.g. \cite[Lemma 4.5]{CS09}), we have that $u$ is a viscosity solution of \eqref{eq:NonlocalDirichletProblem}.
\end{proof}

\end{document}